\documentclass[11pt]{article}
\usepackage[tbtags]{amsmath}
\usepackage{amssymb}
\usepackage{amsthm}
\usepackage[misc]{ifsym}
\usepackage{cases}
\usepackage{cite}	
\usepackage{mathrsfs}	
\usepackage{xcolor}	
\usepackage{graphicx}	
\usepackage{float}	
\usepackage{color}	
\usepackage{hyperref}	
\usepackage[ruled,linesnumbered]{algorithm2e}
\usepackage{amsmath,amssymb}

\hypersetup{hypertex=true,
	colorlinks=true,
	linkcolor=red,
	anchorcolor=blue,
	citecolor=blue}

\numberwithin{equation}{section}
\normalsize

\title{\bf Stochastic Optimal Control Problem under Inside Information\thanks{This work is supported by National Natural Science Foundations of China (12471419, 12271304), and Shandong Provincial Natural Science Foundation (ZR2024ZD35).}}
\author{\normalsize
	Yuxin Ye\thanks{\textit{School of Mathematics, Shandong University, Jinan 250100, P.R.  China, E-mail: yeyuxin@mail.sdu.edu.cn}},
	\ Jingtao Shi\thanks{\textit{Corresponding author, School of Mathematics, Shandong University, Jinan 250100, P.R. China, E-mail: shijingtao@sdu.edu.cn}}}	
\date{}

\newtheorem{mypro}{Proposition}[section]
\newtheorem{mythm}{Theorem}[section]
\newtheorem{mydef}{Definition}[section]
\newtheorem{mylem}{Lemma}[section]
\newtheorem{Remark}{Remark}[section]
\newtheorem{Corollary}{Corollary}[section]

\begin{document}
	
\maketitle

\noindent{\bf Abstract:}\quad
	This paper is concerned with a stochastic optimal control problem under inside information. The control process depends on an $\mathcal{F}_{T_0}$-measurable random variable $Y$, representing the static inside information, and is adapted to the enlarged filtration generated by the underlying Brownian motion and the random variable $Y$. Accordingly, the traditional stochastic integral fails to be well-defined in this non-adapted setting; we adopt forward integrals to formulate the stochastic integral terms in the system. By means of the Donsker delta function, the original controlled system is transformed into a $y$-parameterized system. We further establish the existence of solutions to forward \textit{stochastic differential equations} (SDEs), and prove the uniqueness of solutions via flow transformation techniques. Under a Gaussian assumption on $Y$, we derive both necessary and sufficient optimality conditions for the aforementioned control problem. Subsequently, we formulate the \textit{linear-quadratic} (LQ) optimal control problem under inside information. Through the $y$-parameterized transformation, the original problem is converted into an LQ control problem with random coefficients. A numerical example for the LQ case is provided at the end to validate our theoretical findings.
	\vspace{2mm} 
	
\noindent{\bf Keywords:}\quad Stochastic optimal control, inside information, anticipative stochastic calculus, linear-quadratic control, random coefficient 
	
	\vspace{2mm}
	
\noindent{\bf Mathematics Subject Classification:}\quad 93E20, 60H10, 60H05
	
\section{Introduction}\label{sec1}

	Insider trading refers to transactions featuring information asymmetry in financial markets. The economic models of insider trading were first developed by Kyle \cite{Kyle-1985} and Back \cite{Back-1992} which have become the benchmark framework for subsequent research and are known as the Kyle–Back equilibrium model. Within this framework, the insider can observe or rationally anticipate certain private market information, and is therefore also called an informed trader. This private information typically takes the intrinsic value (liquidation value) of the underlying asset representing forward-looking information about future payoffs. Meanwhile, the market also includes uninformed noise traders, who lack such information and trade for liquidity purposes, as well as market makers, who determine asset prices based on aggregate order flows to clear the market. Noise traders serve as camouflage for insiders, hiding their trading behaviors from market makers. Accordingly, insider trading models are essentially asset pricing problems under information asymmetry.
	
	There is extensive related literature for insider trading. In Aase et al. \cite{Aase-Bjuland-Oksendal-2012}, the static inside information $\tilde{v}$ is treated as a signal process, while the aggregate order flow $y$ serves as an observation process. By combining linear filtering theory with the calculus of variations, the optimal trading strategy for insiders was derived. Danilova \cite{Danilova-2010}, Caldentey and Stacchetti \cite{Caldentey-Stacchetti-2010} generalize static terminal insider information to dynamic information processes. Biagini et al. \cite{Biagini-Hu-Meyer-Oksendal-2012} apply fractional Brownian motion noise to capture the memory effect exhibited by uninformed traders. Additional relevant works include, for instance, \cite{Back-Pedersen-1998}, \cite{Ma-Sun-Zhou-2018}. 
	
	Motivated by such frameworks, numerous studies have explored insider optimal control and investment problems, where investors possess advance information about the system, namely, insider information. More specifically, each investor is assumed to observe a random variable $Y$ that is $\mathcal{F}_T$- or $\mathcal{H}_t$-measurable, with the filtration inclusion $\mathcal{F}_t \subset \mathcal{H}_t$, from the initiation of trading. Under this enlarged filtration setting, standard SDEs fail to be well defined. Pikovsky and Karatzas \cite{Pikovsky-Karatzas-1996} take the terminal value of Brownian motion as inside information, i.e., $Y=W(1)$. They construct a $\mathbb{G}$-Brownian motion $\tilde{W}$ for the enlarged filtration $\mathcal{G} = \mathcal{F}_t \vee \sigma(Y)$ using a Brownian bridge, derive the optimal investment strategy and prove that $W$ constitutes a $\mathbb{G}$-semimartingale with an explicit semimartingale decomposition.
	
	Biagini and \O ksendal \cite{Biagini-Oksendal-2005} do not impose the a priori assumption that $B$ is an $\mathbb{H}$-semimartingale. Instead, they adopt a more general approach, namely, the forward integrals, originally proposed by Russo and Vallois \cite{Russo-Vallois-1993}, to characterize non-adapted stochastic integrals and study the insider's optimal investment problem. They show that if the optimal strategy $\pi^*$ exists and the forward integrand is non-zero, then $B$ becomes an $\mathcal{H}_t$-semimartingale. Their paper pioneered the use of forward integrals to define non-adapted stochastic integrals. Draouil and \O ksendal \cite{Draouil-Oksendal-2015} examine a general nonlinear insider optimal control problem with jumps. An $\mathcal F_{T_0}$-measurable random variable $Y$ represents inside information, and the control $u(\cdot,Y)$ is adapted to the enlarged filtration $\mathbb{H}=\mathbb{F}\vee\sigma(Y)$. All stochastic integral terms in the system are interpreted as forward integrals. Via the Donsker delta function, the forward state system is converted to a $y$-parameterized It\^o's type SDE, and both necessary and sufficient maximum principles for optimality are derived. Their subsequent work \cite{Draouil-Oksendal-2016} further investigates stochastic differential games under inside information.
	
	Besides forward integrals, non-adaptive stochastic integrals can also be formulated via Skorohod integrals. Escudero \cite{Escudero-2018} compares the expected terminal wealth generated under these two integrals. It is demonstrated that forward integrals admit a more straightforward economic interpretation for insider trading compared with Skorohod integrals. For further related studies, see, e.g., \cite{Biagini-Oksendal-2006}, \cite{Aase-Oksendal-2019}, \cite{Oksendal-Rose-2017}, \cite{Elizalde-Escudero-Ichiba-2025}, \cite{Peng-2022}. Meanwhile, Hu and \O ksendal \cite{Hu-Oksendal-2007} introduce a quadratic penalty term imposed by the market on large insider trading volumes, which takes the form $\mathbb{E}\big[-\int_0^T |\mathbb{Q}\pi(t)|^2dt\big]$. In addition, Biagini and \O ksendal \cite{Biagini-Oksendal-2006} investigate quadratic minimum-variance hedging strategies for informed  insiders. 
	
	Motivated by the above literature, this paper studies a stochastic optimal control problem under inside information. The private information is captured by an $\mathcal{F}_{T_0}$-measurable random variable, and forward integrals are adopted to represent all stochastic integral terms in the state system. Firstly, we establish the existence and uniqueness, with particular emphasis on uniqueness, of solutions to general one-dimensional nonlinear SDEs. Rewriting the original forward equation as a $y$-parameterized It\^o's type SDE yields only the existence of solutions, whereas uniqueness has remained an open issue in prior work. Secondly, we derive the necessary condition together with Arrow’s sufficient condition for optimality. We then turn to an LQ insider optimal control problem. Applying the Donsker delta function of $Y$, we convert the forward SDE into a $y$-parameterized It\^o's type SDE which reduces the original problem to an LQ optimal control problem with random coefficients. A \textit{backward stochastic differential equation} (BSDE) is introduced as the \textit{stochastic Riccati equation} (SRE) to solve for the optimal feedback control.
	
	The remainder of this paper is organized as follows. Section 2 presents the necessary theoretical preliminaries, formulates the problem, and establishes the existence and uniqueness of solutions to the state equation. Section 3 derives the necessary optimality condition as well as Arrow’s sufficient condition. Section 4 solves the LQ insider optimal control problem and a numerical example is provided in Section 5. Section 6 concludes the paper.
	
\section{Preliminaries and problem statement }

	Let $T \in (0, \infty)$ be fixed and $(\Omega, \mathbb{F}, \{\mathcal{F}_t\}_{0 \leq t \leq T}, \mathbb{P})$ be a complete filtered probability space. $B(\cdot)$ is a standard $\mathbb{R}$-valued Brownian motion. The filtration $\{\mathcal{F}_t\}_{0 \leq t \leq T}$ is generated by the Brownian motion $B(\cdot)$ and augmented by all the $\mathbb{P}$-null sets in $\mathbb{F}$, i.e. $\mathcal{F}_t := \sigma\{B(s): 0\leq s \leq t\} \vee \mathcal{N}(\mathbb{P})$. $f_x, f_{xx}$ denote the first and second partial derivatives with respect to $x$ for a differential function $f$. $L^2_{\mathcal{F}_t}(\Omega)$ denotes the set of $\mathbb{R}$-valued, $\mathcal{F}_t$-measurable, square integrable random variables; $L^2_{\mathbb{F}}(0,T)$ denotes the set of $\mathbb{R}$-valued, $\mathcal{F}_t$-adapted, square integrable processes on $[0,T]$; $L^{\infty}_{\mathbb{F}}(0,T)$ denotes the set of $\mathbb{R}$-valued, $\mathcal{F}_t$-adapted, bounded processes on $[0,T]$.
	
\subsection{Preliminaries}
	
	To formulate the problem and handle the advanced variable $Y$, we use the forward integral and Donsker delta function. Here, we introduce the definitions and some results. For more details, see, e.g., \cite{Nunno-Oksendal-Proske-2009}, \cite{Draouil-Oksendal-2015}, \cite{Draouil-Oksendal-2016} and the references therein.
	
	\begin{mydef}
		Let $Y: \Omega \rightarrow \mathbb{R}$ be a random variable which belongs to \textit{the Hida distribution space} $(\mathcal{S})^*$. Then a continuous functional $\delta_Y(\cdot):\mathbb{R} \rightarrow (\mathcal{S}^*)$ is called a \textit{Donsker delta function} of $Y$ if it has the property that
		\begin{equation}
			\int_{\mathbb{R}} g(y)\delta_Y(y)dy = g(Y), \quad a.s.,
		\end{equation}
		for all (measurable) $g:\mathbb{R} \rightarrow \mathbb{R}$ such that the integral converges.
	\end{mydef}
	
	\begin{Remark}
		(1) We can derive a connection between the conditional distribution of $Y$ and its Donsker delta function, as stated in Proposition 2.2 of \cite{Draouil-Oksendal-2015}.
		(2) When $Y$ is a Gaussian random variable, its Donsker delta function admits an explicit expression, which can be derived via the Hermite transform; see \cite{Aase-Oksendal-Uboe-2001}.
	\end{Remark}
		
	\begin{mydef}
		A stochastic process $\varphi(t), t\in [0, T]$, is said to be forward integrable with respect to Brownian motion $B(t)$ on $[0, T]$, if the following holds:
		
		(i) Weak sense: There exists a process $I(t), t\in [0, T]$, such that 
		\begin{equation}
			\sup\limits_{t \in [0, T]} \bigg|\int_{0}^{t} \varphi(s) \frac{B((s+\epsilon)\wedge t)- B(s)}{\epsilon} ds - I(t) \bigg| \rightarrow 0, \quad \epsilon \rightarrow 0^+,
		\end{equation} 
		in probability. 
		
		(ii) Strong sense: The limit 
		\begin{equation}
			\lim\limits_{\varepsilon \rightarrow 0^+} \int_0^T \varphi(t)\dfrac{B((t+\varepsilon)\wedge t)}{\varepsilon}dt
		\end{equation}
		exists in $L^2(\Omega)$.
		
		In both cases, we denote the limit by 
		\begin{equation}
			I(t) := \int_{0}^{t} \varphi(s) d^-B(s), \quad t \in [0, T],
		\end{equation}
		and call it the forward integral of $\varphi$ with respect to $B$.
	\end{mydef}
	
	The forward integral is an extension of the integral with respect to a semimartingale.
	\begin{mylem}
		 Let $\mathbb{G}:=\left\{\mathcal{G}_t, t \in[0, T]\right\}(T>0)$ be a given filtration. Suppose that
		 
		1. $B$ is a semimartingale with respect to the filtration $\mathbb{G}$.
		
		2. $\varphi$ is $\mathbb{G}$-predictable and the integral
		\begin{equation*}
			\int_0^T \varphi(t) dB(t)
		\end{equation*}
		with respect to $B$ exists. Then $\varphi$ is forward integrable and
		\begin{equation}
			\int_0^T \varphi(t) d^{-} B(t)=\int_0^T \varphi(t) dB(t) .
		\end{equation}
	\end{mylem}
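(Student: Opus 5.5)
The plan is to write the $\epsilon$-approximation of the forward integral as an ordinary $\mathbb{G}$-stochastic integral, using a stochastic Fubini argument, and then pass to the limit $\epsilon\to0^+$ with the dominated convergence theorem for semimartingale integrals.

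\textbf{Step 1 (rewriting the approximation).} Since $B$ is a $\mathbb{G}$-semimartingale, write $B(t)=B(0)+M(t)+A(t)$, with $M$ a continuous $\mathbb{G}$-local martingale and $A$ of finite variation. For fixed $\epsilon>0$,
\[
\frac{B((s+\epsilon)\wedge t)-B(s)}{\epsilon}=\frac1\epsilon\int_s^{(s+\epsilon)\wedge t} dB(r).
\]
Hence
\[
\int_0^t \varphi(s)\frac{B((s+\epsilon)\wedge t)-B(s)}{\epsilon}\,ds=\int_0^t \varphi(s)\frac1\epsilon\int_0^t \mathbf 1_{\{s<r\le s+\epsilon\}}\,dB(r)\,ds .
\]
We interchange the $ds$ and $dB(r)$ integrals using the stochastic Fubini theorem for semimartingales (Protter, Thm. IV.65). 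This gives
\[
\int_0^t \varphi_\epsilon(r)\,dB(r),\qquad \varphi_\epsilon(r):=\frac1\epsilon\int_{(r-\epsilon)\vee0}^{r}\varphi(s)\,ds .
\]
The averaged integrand $\varphi_\epsilon$ is continuous and $\mathbb{G}$-adapted, hence $\mathbb{G}$-predictable, because it only uses values of $\varphi$ before time $r$.

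\textbf{Step 2 (convergence of the integrands).} By the Lebesgue differentiation theorem, $\varphi_\epsilon(r)\to\varphi(r)$ for a.e.\ $r$, pathwise. Moreover $|\varphi_\epsilon|\le \varphi^*$, where $\varphi^*(r)=\sup_{0<h\le r}\frac1h\int_{r-h}^r|\varphi(s)|\,ds$ is the (left) Hardy–Littlewood maximal function.

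\textbf{Step 3 (passing to the limit).} Localize so that $\varphi$ is bounded. The general case then follows by approximating $\varphi$ with truncations $\varphi\mathbf 1_{\{|\varphi|\le n\}}$, or by stopping times $\tau_n$ that make $\int_0^{\tau_n}\varphi^2d\langle M\rangle+\int_0^{\tau_n}|\varphi||dA|$ bounded. Under boundedness:
\begin{itemize}
\item The martingale part: $E\sup_t|\int_0^t(\varphi_\epsilon-\varphi)dM|^2\le 4E\int_0^T|\varphi_\epsilon-\varphi|^2d\langle M\rangle\to0$ by dominated convergence, since $\langle M\rangle_t=t$ because $M$ is a continuous $\mathbb{G}$-local martingale with $\langle B\rangle_t=t$.
\item The finite-variation part: it converges pathwise by dominated convergence, provided the a.e.\ convergence holds $|dA|$-a.e.
\end{itemize}
Together these give uniform convergence in probability to $\int_0^t\varphi\,dB$, which is the weak-sense forward integral. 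The identity at $t=T$ follows.

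\textbf{Main obstacle.} Step 3 requires $\varphi_\epsilon\to\varphi$ almost everywhere with respect to $d\langle M\rangle$ and $|dA|$, not merely $dt$.
\begin{itemize}
\item For $M$ this is harmless, since $d\langle M\rangle=dt$.
\item For $A$, which may be singular with respect to $dt$, Lebesgue differentiation can fail. I would first handle the case where $\varphi$ has (left-)continuous paths, where the convergence is everywhere. I would then extend to general predictable $\varphi$ by a monotone class argument: approximate $\varphi$ in the semimartingale topology by continuous adapted processes, and control the forward approximations uniformly using the maximal bound from Step 2.
\end{itemize}
If this extension proves delicate, I would state the lemma under the standard interpretation from \cite{Nunno-Oksendal-Proske-2009}, in which $\varphi$ is càglàd. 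Under that assumption the continuous-case argument suffices.
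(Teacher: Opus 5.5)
The paper gives no proof of this lemma. It is quoted from the forward-integral literature, so there is no in-paper argument to compare with. Your Steps 1--3 follow the usual regularization route: write the $\epsilon$-approximation as $\int_0^t\varphi_\epsilon\,dB$, then let $\epsilon\to0^+$.

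\textbf{The gap.} The final step, extending to arbitrary $\mathbb{G}$-predictable $\varphi$ by a monotone class argument, cannot succeed, because the lemma is false in that generality. The forward integral only sees $\varphi$ up to $dt$-null sets. The integral $\int\varphi\,dB$ also sees any part of the $\mathbb{G}$-drift that is singular with respect to $dt$, and such a part can occur. Here is a counterexample.
\begin{itemize}
\item Let $W$ be a Brownian motion, $M_t:=\sup_{s\le t}W_s$ and $R:=2M-W$. By Pitman's theorem $R$ is a $\mathrm{BES}(3)$ process, so $B_t:=R_t-\int_0^tR_s^{-1}\,ds$ is a Brownian motion.
\item Let $\mathbb{G}$ be the augmented filtration of $W$, which contains that of $B$. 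Then $B=-W+2M-\int_0^\cdot R_s^{-1}\,ds$ is a continuous $\mathbb{G}$-semimartingale.
\item Take $\varphi(t):=\mathbf{1}_{\{W_t=M_t\}}$, which is bounded and $\mathbb{G}$-predictable. It vanishes $dt$-a.e., so every $\epsilon$-approximation is identically $0$ and $\int_0^T\varphi\,d^-B=0$.
\item Since $dM$ is carried by $\{W=M\}$,
\begin{equation*}
\int_0^T\varphi\,dB=-\int_0^T\varphi\,dW+2\int_0^T\mathbf{1}_{\{W_s=M_s\}}\,dM_s-\int_0^T\varphi(s)R_s^{-1}\,ds=2M_T>0\quad\text{a.s.}
\end{equation*}
\end{itemize}
No uniform bound via the maximal function can repair this. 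Here $\varphi_\epsilon\equiv0$, while $\varphi=1$ on the set carrying $dM$.

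\textbf{The fix.} Your fallback is therefore not optional: a hypothesis has to be added to the statement. Two choices work.
\begin{itemize}
\item If $\varphi$ is c\`agl\`ad, then $\varphi_\epsilon(r)\to\varphi(r)$ for every $r>0$, and $|\varphi_\epsilon(r)|\le\sup_{s\le r}|\varphi(s)|$. The bound is left-continuous and adapted, hence a locally bounded predictable process. The dominated convergence theorem for semimartingale integrals then gives $\int_0^\cdot\varphi_\epsilon\,dB\to\int_0^\cdot\varphi\,dB$ uniformly on compacts in probability. No truncation or maximal function is needed.
\item Alternatively, keep general predictable $\varphi$ but assume the $\mathbb{G}$-drift is $dA=\alpha\,dt$ with $\int_0^T\alpha^2\,dt<\infty$ a.s. This covers the Gaussian setting of the paper, where $B=\widetilde B+\int_0^\cdot\rho_s(Y)\,ds$ with a drift density square integrable on $[0,T]$. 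There, the pathwise fact $\|\varphi_\epsilon-\varphi\|_{L^2(0,T)}\to0$ handles both parts. The martingale part works because $d\langle M\rangle=dt$. The drift part works because $\int_0^T|\varphi_\epsilon-\varphi||\alpha|\,dt\le\|\varphi_\epsilon-\varphi\|_{L^2}\|\alpha\|_{L^2}$. This avoids Lebesgue points altogether.
\end{itemize}
Your truncation step also needs more than the bare condition $\int_0^T|\varphi|\,|dA|<\infty$. That condition alone does not give uniform-in-$\epsilon$ control of $\int_0^T|(\varphi-\varphi\mathbf{1}_{\{|\varphi|\le n\}})_\epsilon|\,|dA|$.
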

	
	As a consequence of the above, we get the following useful result.
	\begin{mylem}\label{property}
		Let $\varphi(t, y)$ be an $\mathbb{F}$-adapted process for each $y \in \mathbb{R}$ such that
		\begin{equation*}
			\int_0^T \varphi(t, y) dB(t)
		\end{equation*}
		exists for each $y \in \mathbb{R}$. Let $Y$ be a random variable. Then $\varphi(t, Y)$ is forward integrable and
		\begin{equation}
			\int_0^T \varphi(t, Y)d^{-} B(t)=\int_0^T \varphi(t, y) dB(t)\Big|_{y=Y}.
		\end{equation}
	\end{mylem}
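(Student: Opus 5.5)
The idea is to reduce the claim to the preceding lemma by enlarging the filtration. Put $\mathcal{G}_t := \mathcal{F}_t \vee \sigma(Y)$. The preceding lemma applies only if $B$ is a $\mathbb{G}$-semimartingale. In the setting of this paper $Y$ is $\mathcal{F}_{T_0}$-measurable and admits a Donsker delta function, which we take as a standing hypothesis. Jacod's density hypothesis then gives that $B$ is a $\mathbb{G}$-semimartingale on $[0,T]$, with decomposition $B(t) = \tilde B(t) + \int_0^t \alpha(s,Y)\,ds$. Here $\tilde B$ is a $\mathbb{G}$-Brownian motion. The drift $\alpha$ is obtained from the Donsker delta function (equivalently, from the conditional density of $Y$ given $\mathcal{F}_t$) via the Clark--Ocone / Itô representation of $\delta_Y(y)$.

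Next I would check the hypotheses of the preceding lemma for the integrand $\varphi(t,Y)$. Choose a jointly measurable version $(t,\omega,y)\mapsto\varphi(t,y)(\omega)$ that is $\mathbb{F}$-predictable for each fixed $y$. Then $\varphi(t,Y)$ is $\mathbb{G}$-predictable, since a composition of a $\mathcal{P}(\mathbb{F})\otimes\mathcal{B}(\mathbb{R})$-measurable map with the $\mathcal{G}_0$-measurable variable $Y$ is $\mathcal{P}(\mathbb{G})$-measurable. This can be verified first on elementary integrands $\varphi(t,y) = \mathbf{1}_{(a,b]}(t)\,\xi\, h(y)$ with $\xi$ $\mathcal{F}_a$-measurable, and then extended by a monotone class argument. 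The preceding lemma then gives
$$\int_0^T \varphi(t,Y)\,d^-B(t) = \int_0^T \varphi(t,Y)\,dB(t),$$
where the right-hand side is the $\mathbb{G}$-semimartingale integral.

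It then remains to identify $\int_0^T \varphi(t,Y)\,dB(t)$ with $\int_0^T \varphi(t,y)\,dB(t)\big|_{y=Y}$. For elementary integrands this is a direct computation: both sides equal $\sum_i \xi_i(Y)\,\bigl(B(t_{i+1}) - B(t_i)\bigr)$. For general $\varphi$, approximate by elementary $\varphi_n(t,y)$ with $\int_0^T |\varphi_n(t,y)-\varphi(t,y)|^2\,dt \to 0$ in probability, locally uniformly in $y$. The left side converges by the dominated convergence theorem for semimartingale integrals in $\mathbb{G}$. The right side requires choosing a version of $y \mapsto \int_0^T \varphi(t,y)\,dB(t)$ that is jointly measurable and continuous (or at least measurable) in $y$. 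Substituting the $\sigma(Y)$-measurable variable $Y$ is then legitimate, and convergence in probability survives the substitution, by conditioning on $Y$ or via the Donsker delta representation $\int_{\mathbb{R}} g(y)\,\delta_Y(y)\,dy = g(Y)$.

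I expect the main obstacle to be this last step: making the evaluation $y = Y$ rigorous, given that $\int_0^T \varphi(t,y)\,dB(t)$ is only defined up to null sets for each $y$. I would first secure a measurable-in-$y$ version, using the Kolmogorov-type criterion when $\varphi$ is Lipschitz in $y$, or Stricker--Yor's measurable-parameter stochastic integrals in general. An alternative route that avoids the semimartingale step is to work directly with the $\epsilon$-approximations in the definition of the forward integral. For each fixed $y$, the Riemann-type sums converge in probability to the Itô integral uniformly in $t$. Substituting $y = Y$ into those $\epsilon$-sums gives exactly the defining approximation of $\int_0^t \varphi(s,Y)\,d^-B(s)$. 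Convergence after substitution can then be transferred by conditioning on $Y$ once uniformity on compacts in $y$ is established.
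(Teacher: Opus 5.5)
Your primary argument is a genuinely different route from the paper's, and it proves a narrower statement. The paper gives no argument beyond ``as a consequence of the above''. Since the lemma puts no condition on $Y$, the intended argument is Lemma 2.1 with $\mathbb{G}=\mathbb{F}$ for each \emph{fixed} $y$: there $B$ is trivially a semimartingale and $\varphi(\cdot,y)$ is adapted, so $S_\epsilon(t,y):=\int_0^t\varphi(s,y)\epsilon^{-1}\big(B((s+\epsilon)\wedge t)-B(s)\big)ds$ converges to the It\^o integral $I(t,y)$. One then observes that the defining approximation of $\int_0^t\varphi(s,Y)d^-B(s)$ is literally $S_\epsilon(t,Y)$. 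This is the alternative in your last paragraph. Your main route instead applies Lemma 2.1 with $\mathbb{G}=\mathbb{H}$, which requires $B$ to be an $\mathbb{H}$-semimartingale. That buys a treatment of the substitution backed by established theory (Jacod's theorem, Stricker--Yor parameter-dependent integrals), yields the information drift of Corollary 4.1 for free, and covers the Gaussian $Y$ of Section 3. But it cannot reach an arbitrary $Y$. Take $\sigma(Y)\vee\mathcal{N}=\mathcal{F}_T$, which is possible since $\mathcal{F}_T$ is countably generated, and such $Y$ is $\mathcal{F}_{T_0}$-measurable. Then $\mathcal{H}_t$ is constant on $[0,T]$, every $\mathbb{H}$-local martingale is constant, and $B$, having infinite variation, is not an $\mathbb{H}$-semimartingale. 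Presupposing this property also runs against the forward-integral viewpoint recalled in the introduction, where (following Biagini--\O ksendal) the semimartingale property of $B$ is a conclusion, not an assumption.

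In the paper's route the one real step is exchanging $\epsilon\to0^+$ with the evaluation $y=Y$, which the paper passes over silently. You rightly flag it, but ``conditioning on $Y$'' does not accomplish it: for fixed $y$ the convergence holds under $\mathbb{P}$, not under $\mathbb{P}(\cdot\,|\,Y=y)$, and $Y$ is correlated with $B$. There are two valid fixes.

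\emph{Splitting.} If $\sup_{|y|\le K}\sup_t|S_\epsilon(t,y)-I(t,y)|\to0$ in probability for each $K$ (this needs regularity of $\varphi$ in $y$), simply split on $\{|Y|\le K\}$.

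\emph{Conditional density.} Under the paper's Donsker delta assumption, use $p_T(y):=\mathbb{E}[\delta_Y(y)|\mathcal{F}_T]$ as the conditional density of $Y$ given $\mathcal{F}_T$. Put $g_\epsilon(y):=\min\{1,\sup_t|S_\epsilon(t,y)-I(t,y)|\}$. Then $\mathbb{E}[g_\epsilon(Y)]=\int_{\mathbb{R}}\mathbb{E}[g_\epsilon(y)p_T(y)]dy$. This tends to $0$ by dominated convergence, first in $\omega$ for each $y$, then in $y$ with dominating function $\mathbb{E}[p_T(y)]$, the density of $Y$.

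Both fixes need $\varphi$ itself to be $\mathcal{P}(\mathbb{F})\otimes\mathcal{B}(\mathbb{R})$-measurable, and $I$ to be taken in a jointly measurable version. The first requirement must be read into the hypotheses rather than obtained by ``choosing a version''; note also that ``jointly measurable and $\mathbb{F}$-predictable for each $y$'' is weaker. To see why, let $Y$ have a continuous law. Then $\varphi(t,y)=\mathbf{1}_{\{Y=y\}}\psi(t)$ and $0$ are versions of each other for every $y$, yet they give $\psi(t)$ and $0$ at $y=Y$.
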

	 
	In the sequel, we give the It\^o's formula for forward integrals, which was first proved in \cite{Russo-Vallois-2000}.
	\begin{mydef}
		A forward process is a stochastic process of the form
		\begin{align}
			X(t)=x+\int_0^t b(s) ds+\int_0^t \sigma(s) d^{-} B(s), \quad  t \in[0, T], \label{forward_pro}
		\end{align}
		($x$ is a constant), where $\int_0^T|b(s)|ds<\infty$, a.s., and $\sigma$ is a forward integrable stochastic process. A shorthand notation for \eqref{forward_pro} is that
		\begin{align}
			d^{-} X(t)=b(t) dt+\sigma(t) d^{-} B(t). 
		\end{align}
	\end{mydef}
	
	\begin{mythm}\normalfont\textbf{(One-dimensional It\^o's formula for forward integrals)}
		Let $X(t)$ be a forward process with the form of \eqref{forward_pro}. Let $f \in \mathbf{C}^{1,2}([0, T] \times \mathbb{R})$ and define
		\begin{align}
			Y(t)=f(t, X(t)), \quad t \in[0, T].
		\end{align}
		Then $Y(t), t \in[0, T]$, is a forward process and
		\begin{align}
			d^{-} Y(t)=\frac{\partial f}{\partial t}(t, X(t)) dt + \frac{\partial f}{\partial x}(t, X(t)) d^{-} X(t)+\frac{1}{2} \frac{\partial^2 f}{\partial x^2}(t, X(t)) \sigma^2(t) dt .
		\end{align}
	\end{mythm}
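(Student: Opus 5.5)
We prove the forward It\^o formula by passing to the limit $\epsilon\to0^+$ in the regularized increments that define the forward integral, as in Russo--Vallois. The first step is to discretize. Fix $t$ and a partition $0=t_0<t_1<\dots<t_n=t$ with mesh going to zero. We write the telescoping sum
\begin{align*}
f(t,X(t))-f(0,x)=\sum_{k}\big[f(t_{k+1},X(t_{k+1}))-f(t_k,X(t_{k+1}))\big]+\sum_k\big[f(t_k,X(t_{k+1}))-f(t_k,X(t_k))\big].
\end{align*}
The first sum converges almost surely to $\int_0^t \partial_t f(s,X(s))ds$, by continuity of $\partial_t f$ and of the paths of $X$. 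The paths of $X$ are continuous because the forward integral in the weak sense is a uniform-in-$t$ limit of continuous processes.

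The second step is a Taylor expansion of the second sum to second order in $x$:
\begin{align*}
f_x(t_k,X(t_k))\Delta X_k+\tfrac12 f_{xx}(t_k,\xi_k)(\Delta X_k)^2 ,
\end{align*}
with $\Delta X_k=\int_{t_k}^{t_{k+1}}b\,ds+\int_{t_k}^{t_{k+1}}\sigma\,d^-B$.

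The $b$-part of $\sum_k f_x\,\Delta X_k$ is a Riemann sum converging to $\int_0^t f_x b\,ds$. For the $\sigma$-part we must show that $\sum_k f_x(t_k,X(t_k))\int_{t_k}^{t_{k+1}}\sigma\,d^-B$ converges to $\int_0^t f_x(s,X(s))\sigma(s)\,d^-B(s)$. A non-adapted integrand cannot be pulled inside a stochastic integral in general, but for forward integrals it can. The defining $\epsilon$-approximations are pathwise Lebesgue integrals, so for a random variable $G$ we have $G\int_a^b\sigma\,d^-B=\int_a^b G\sigma\,d^-B$, because the $\epsilon$-approximations agree. This reduces the claim to forward integrability of $f_x(s,X(s))\sigma(s)$ together with convergence of its step approximations.

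The third step concerns the quadratic terms. We need the quadratic variation of $X$ along the partition,
\begin{align*}
\sum_k (\Delta X_k)^2\to\int_0^t\sigma^2(s)ds,
\end{align*}
in probability, and the same with the continuous weight $f_{xx}(t_k,\xi_k)$, where uniform continuity on compacts handles replacing $\xi_k$ by $X(t_k)$. The drift contributions and the cross terms vanish because $\int|b|<\infty$ and $X$ is continuous.

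The main obstacle is the identity $[X]_t=\int_0^t\sigma^2ds$ for a forward process, and the matching convergence of the forward-integral Riemann sums. For an arbitrary forward-integrable $\sigma$ neither follows from the definition alone. I would therefore follow Russo--Vallois \cite{Russo-Vallois-2000} and work throughout with the $\epsilon$-regularization instead of partitions. The quadratic variation is defined as the limit of $\frac1\epsilon\int_0^t (X(s+\epsilon)-X(s))^2ds$. The formula then follows by expanding $f(X(s+\epsilon))-f(X(s))$ and integrating $\frac1\epsilon\int_0^t\cdot\,ds$. In this setting the forward integral term appears exactly by definition, and the remaining task is to prove $[X]=\int\sigma^2ds$. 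To do this I would first treat simple (step) $\sigma$ with random coefficients, where it reduces to $[B]_t=t$ times a random constant. I would then pass to general forward-integrable $\sigma$ by the approximation argument of \cite{Nunno-Oksendal-Proske-2009}, using the hypothesis that $\sigma$ is forward integrable together with the paper's standing regularity. If needed, this regularity is strengthened to $\sigma$ being c\`agl\`ad and Malliavin differentiable, so that the forward integral equals the Skorohod integral plus a trace term.
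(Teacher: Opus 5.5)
The paper gives no proof of this theorem; it only cites Russo--Vallois. Measured against that, your outline has a genuine gap. It sits exactly where you flag it, and the remedies you propose do not close it at the stated generality.

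\textbf{Associativity.} Passing to the $\epsilon$-regularization does not make the required term appear ``by definition''. What appears by definition is the forward integral against $X$,
\[
\int_0^t f_x(s,X(s))\,d^-X(s):=\lim_{\epsilon\to0^+}\frac{1}{\epsilon}\int_0^t f_x(s,X(s))\big(X(s+\epsilon)-X(s)\big)\,ds .
\]
The claim that $Y$ is a forward process in the sense of \eqref{forward_pro} needs more. It requires $f_x(\cdot,X(\cdot))\sigma(\cdot)$ to be forward integrable against $B$, with $\int f_x\,d^-X=\int f_x b\,ds+\int f_x\sigma\,d^-B$. The increment $X(s+\epsilon)-X(s)-\int_s^{s+\epsilon}b\,dr=\int_s^{s+\epsilon}\sigma\,d^-B$ is not $\sigma(s)\big(B(s+\epsilon)-B(s)\big)$. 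So this is an associativity property of forward integration. It is your Step~2 problem, the convergence of $\sum_k f_x(t_k,X(t_k))\int_{t_k}^{t_{k+1}}\sigma\,d^-B$, relocated rather than removed.

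\textbf{Quadratic variation.} The identity $[X]_t=\int_0^t\sigma^2ds$ cannot be reached by ``step $\sigma$, then approximate''. Forward integrability only says that an $\epsilon$-limit exists for the given $\sigma$; it is not the closure of a bounded map defined on step processes. Hence the hypothesis gives you no continuity of $\sigma\mapsto\int\sigma\,d^-B$ along step approximations $\sigma^{(n)}\to\sigma$. Writing $I^{(n)}:=\int_0^\cdot\sigma^{(n)}\,d^-B$, it also gives no bound on $\frac{1}{\epsilon}\int_0^t\big(I^{(n)}(s+\epsilon)-I^{(n)}(s)\big)^2ds$ uniform in $n$. Such a bound is what interchanging $n\to\infty$ and $\epsilon\to0^+$ would require. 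There is no ``standing regularity'' to fall back on either: \textbf{(H1)} concerns the coefficients of the control problem, not the abstract integrand $\sigma$ in \eqref{forward_pro}. The Russo--Vallois formula the paper cites is itself formulated for processes of finite quadratic variation, so these two facts are exactly the inputs it needs.

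\textbf{Your fallback.} It can be made rigorous, but it proves a weaker theorem. Suppose $\sigma$ is c\`agl\`ad and Malliavin differentiable, with the usual $\mathbb{L}^{1,2}$-type integrability and existence of the trace $D_{s^+}\sigma(s)$. Then $\int\sigma\,d^-B$ is a Skorohod integral plus an absolutely continuous process. The Skorohod part has quadratic variation $\int\sigma^2ds$ by Nualart--Pardoux, and associativity can be checked through the It\^o formula for Skorohod integrals.

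\textbf{A shorter route for the paper's use.} Take $Y$ Gaussian as in \eqref{Y}. The Corollary of Section 4 makes $B$ an $\mathbb{H}$-semimartingale. By the lemma preceding Lemma~\ref{property}, the forward integral of an $\mathbb{H}$-predictable integrand then coincides with its $\mathbb{H}$-semimartingale integral whenever the latter exists. So $X$ is an $\mathbb{H}$-semimartingale with $[X]_t=\int_0^t\sigma^2ds$, and the classical It\^o formula applies. The same lemma then identifies $\int f_x\sigma\,dB$ with $\int f_x\sigma\,d^-B$.

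In both cases an extra structural hypothesis does the work. As written, your proposal restates, rather than proves, the two facts on which the general formula rests.
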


\subsection{Problem statement}

	In this work, we consider a stochastic optimal control problem with inside information, motivated by insider trading phenomena in financial markets. To specify the problem, we consider the state dynamics via the following SDE: 
	\begin{align}
		\begin{cases}
			d^{-}X(t) = b(t, X(t), u(t), Y)dt + \sigma(t, X(t), u(t), Y)d^{-}B(t),\\
			X(0) = x_0,
		\end{cases}\label{state}
	\end{align} 
	where $Y \in L^2_{\mathcal{F}_{T_0}}(\Omega, \mathbb{R})$, $T_0 > T$, represents the inside information, i.e., future information available to the controller in advance. The control process $u(\cdot) = u(\cdot, Y)$ depends on both $Y$ and $\mathcal{F}_t$ and takes values in a nonempty convex set $U \subseteq \mathbb{R}$. For each $y \in \mathbb{R}$, $u(t, y)$ is $\mathcal{F}_t$-adapted. As discussed in Section \ref{sec1}, all stochastic integrals here are interpreted as forward integrals.
		
	In this framework, the controller has a performance functional given by
		\begin{align}
			J(u(\cdot, Y)) = \mathbb{E}\bigg[\int_0^T f(t, X(t), u(t, Y), Y)dt + g(X(T), Y)\bigg].\label{cost}
		\end{align}
		
	Here, $x_0 \in \mathbb{R}$, $b(t, x, u, y): \Omega \times [0, T] \times \mathbb{R} \times U \times \mathbb{R} \rightarrow \mathbb{R}$, $\sigma(t, x, u, y): \Omega \times[0, T] \times \mathbb{R} \times U \times \mathbb{R} \rightarrow \mathbb{R}$, $f(t, x, u, y): \Omega \times [0, T] \times \mathbb{R} \times U \times \mathbb{R} \rightarrow \mathbb{R}$ are given $\mathcal{F}_t$-adapted, $g(x, y): \Omega \times \mathbb{R} \times \mathbb{R} \rightarrow \mathbb{R}$ is $\mathcal{F}_T$-measurable, for each $(x, u, y)$. 
	
	We introduce the following assumption.
	
	\textbf{(H1)} $b, \sigma, f, g$ are twice continuously differentiable with respect to $x$. They and their derivatives $b_x, b_{x x}, \sigma_x, \sigma_{x x}, f_{ x}, f_{x x}, g_{ x}, g_{ x x}$ are continuous in $\left(t, x, u\right)$. Moreover, $b_x$, $b_{x x}$, $\sigma_x$, $\sigma_{x x}$,$ f_{x x}$, $g_{x x}$ are bounded by some constant $L>0$, and $b$, $\sigma$, $f_{x}$, $g_{x}$ are bounded by $L\left(1+|x|+\left|u\right|\right)$.
		
	Denote $\mathbb{H} := \{\mathcal{H}_t\}_{t \geq 0}$, where $\mathcal{H}_t := \mathcal{F}_t \vee \sigma(Y)$. To ensure its right-continuity, we set $\mathcal{H}_t = \mathcal{H}_{t^+} = \bigcap \limits_{s>t}\mathcal{H}_s$. Then we define the admissible control set as follows:
		\begin{align*}
			\mathcal{U} := \Big\{u \bigm| &u(t, Y):\Omega \times [0, T] \times \mathbb{R} \rightarrow U  \text{ is } \mathcal{H}_t \text{-progressive measurable, }\\
			&\sup\limits_{0 \leq t \leq T} \mathbb{E}|u(t, Y)|^i < \infty, i = 1, 2, \cdots\Big\}.
		\end{align*}
		
	\noindent\textbf{Optimal control problem under inside information}\quad {\it To find an admissible control $u^*(\cdot) \in \mathcal{U}$ satisfying \eqref{state} for the controller who has inside information $Y$, such that
		\begin{align}
			J(u^*(\cdot, Y)) = \max\limits_{u \in \mathcal{U}} J(u(\cdot, Y)).
		\end{align}}
		
	We also assume that $Y$ has a Donsker delta function $\delta_Y(y) \in (\mathcal{S})^*$. Then using $\delta_Y(y)$, we can rewrite the $\mathbb{H}$-adapted process $X(t)$ as follows:
		\begin{equation}
			X(t) = X(t, Y) = X(t, y)\big|_{y=Y} = \int_{\mathbb{R}} X(t, y)\delta_Y(y)dy,
		\end{equation}
	where $X(t, y)$ is $\mathbb{F}$-adapted for any $y \in \mathbb{R}$. Utilizing the definition of $\delta_Y(y)$ and Lemma \ref{property}, we derive
		\begin{align*}
			X(t) &= x_0 + \int_0^t b\big(s, X(s), u(s), Y)\big)ds + \int_0^t \sigma\big(s, X(s), u(s), Y)\big)d^{-}B(s)\\
				&= x_0 + \int_0^t b\big(s, X(s, Y), u(s, Y), Y)\big)ds + \int_0^t \sigma\big(s, X(s, Y), u(s, Y), Y)\big)d^{-}B(s)\\
				&= x_0 + \bigg(\int_0^t b\big(s, X(s, y), u(s, y), y)\big)ds\bigg)\bigg|_{y=Y} + \bigg(\int_0^t \sigma\big(s, X(s, y), u(s, y), y)\big)dB(s)\bigg)\bigg|_{y=Y}\\
				&= x_0 +  \int_{\mathbb{R}}\bigg[\int_0^t b\big(s, X(s, y), u(s, y), y)ds + \int_0^t \sigma\big(s, X(s, y), u(s, y), y)\big)dB(s)\bigg]\delta_Y(y)dy.
		\end{align*}
	Noting that the left-hand side of the equation equals $\int_{\mathbb{R}}X(t, y)\delta_Y(y)dy$, we introduce the following It\^o's type SDE:
		\begin{align}
			\begin{cases}
				dx(t, y) = b(t, x(t, y), u(t, y), y)dt + \sigma(t, x(t, y), u(t, y), y)dB(t),\\
				x(0, y) = x_0.
			\end{cases}\label{y_state}
		\end{align}
		
	The following proposition addresses the existence of solutions to the state equation \eqref{state}.	
		\begin{mypro}
			For given $u(\cdot, y)$ and $y\in\mathbb{R}$, if $x(t, y)$ satisfies \eqref{y_state}, then the process $\widetilde{X}(t) = x(t, Y), t\in [0, T]$ is a solution of \eqref{state}.
		\end{mypro}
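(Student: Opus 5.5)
The plan is to substitute $y=Y$ into the integral form of \eqref{y_state} and use Lemma \ref{property} to turn the resulting Itô integral into a forward integral.

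\textbf{Step 1 (integral form).} For each fixed $y\in\mathbb{R}$, the process $x(\cdot,y)$ is $\mathbb{F}$-adapted and satisfies
\begin{equation*}
x(t,y)=x_0+\int_0^t b\big(s,x(s,y),u(s,y),y\big)ds+\int_0^t \sigma\big(s,x(s,y),u(s,y),y\big)dB(s),\quad t\in[0,T].
\end{equation*}
The integrand $\varphi(s,y):=\sigma(s,x(s,y),u(s,y),y)$ is then $\mathbb{F}$-adapted for each $y$. Its Itô integral exists because of the linear growth bound in (H1), the moment bounds on $u$, and the standard $L^2$ estimates for $x(\cdot,y)$.

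\textbf{Step 2 (evaluate at $y=Y$).} Evaluate both sides at $y=Y$.
\begin{itemize}
\item The left side is $\widetilde{X}(t)$ by definition.
\item The $ds$-integral is a pathwise Lebesgue integral, so evaluating at $Y$ commutes with integration. This gives $\int_0^t b(s,\widetilde X(s),u(s,Y),Y)\,ds$, since $x(s,Y)=\widetilde X(s)$ and $u(s,Y)=u(s)$.
\item For the stochastic term, Lemma \ref{property} says $\varphi(\cdot,Y)$ is forward integrable and
\begin{equation*}
\int_0^t \varphi(s,Y)\,d^-B(s)=\int_0^t \varphi(s,y)\,dB(s)\Big|_{y=Y}.
\end{equation*}
\end{itemize}
Combining these three pieces gives exactly the integral form of \eqref{state} for $\widetilde X$. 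Hence $\widetilde X$ is a solution, and it is $\mathbb{H}$-adapted since $x(t,\cdot)$ is jointly measurable and $Y$ is $\mathcal{H}_0$-measurable.

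\textbf{Main obstacle: measurability in $y$.} The substitution $y=Y$ requires a version of $(t,y,\omega)\mapsto x(t,y,\omega)$, and of the Itô integral $\int_0^t\varphi(s,y)\,dB(s)$, that is jointly measurable, ideally continuous in $(t,y)$. Otherwise "$\cdot|_{y=Y}$" is ill-defined, because each $x(\cdot,y)$ is only specified up to a null set depending on $y$.

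To obtain such a version, I would assume $b,\sigma$ and $u$ are Lipschitz or continuous in $y$ and derive moment estimates of the form $\mathbb{E}\sup_t|x(t,y)-x(t,y')|^p\le C|y-y'|^{\alpha p}$. These follow from Burkholder–Davis–Gundy and Gronwall inequalities together with the Lipschitz property in $x$ (bounded $b_x,\sigma_x$). Kolmogorov's continuity criterion then yields a continuous random field.

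If continuity in $y$ of $u$ is not available, I would fall back on a measurable-version argument: approximate $Y$ by simple random variables $Y_n=\sum_k y_k\mathbf{1}_{A_k}$, for which substitution is trivial, and pass to the limit. The weak-sense definition of the forward integral allows such limits in probability.

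This is also the step where the Donsker delta representation $\int_{\mathbb{R}}x(t,y)\delta_Y(y)\,dy$ in the preceding computation becomes rigorous: it is just $g(Y)$ with $g(y)=x(t,y)$.
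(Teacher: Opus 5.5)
Your proposal follows the paper's proof: write \eqref{y_state} in integral form for fixed $y$, evaluate at $y=Y$ (trivially for the $ds$-term), and invoke Lemma \ref{property} to identify the evaluated It\^o integral with the forward integral of $\sigma(\cdot,x(\cdot,Y),u(\cdot,Y),Y)$. Your extra discussion of a jointly measurable (or continuous) version in $y$ addresses a point the paper leaves implicit inside Lemma \ref{property}; note that the Kolmogorov route needs regularity in $y$ that (H1) does not provide, so it is an added hypothesis rather than part of the paper's argument.
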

	\begin{proof}
		Under assumption (H1), for each $y \in \mathbb{R}$, equation \eqref{y_state} possesses a unique solution $x(t, y) \in L^2_{\mathcal{F}}(0, T; \mathbb{R})$. We then rewrite the system in integral form as follows:
		\begin{align*}
			x(t, y) = x_0 +  \int_0^t b(s, x(s, y), u(s, y), y)ds + \int_0^t \sigma(s, x(s, y), u(s, y), y)\big)dB(s),
		\end{align*}
		where $\sigma(\cdot, x(\cdot, y), u(\cdot, y), y)$ is $\mathbb{F}$-adapted. By Lemma \ref{property}, replacing $y$ by $Y$ yields
		\begin{align*}
			x(t, Y) & =x_0+\int_0^t b(s,x(s,Y),u(s,Y),Y)dt +\bigg(\int_0^t\sigma(s,x(s,y),u(s,y),y)dB(s)\bigg)\bigg|_{y=Y} \\
			& =x_0+\int_0^t b(s, x(s, Y), u(s, Y), Y) d t+\int_0^t \sigma(s, x(s, Y), u(s, Y), Y) d^- B(s).
		\end{align*}
		Hence, $\widetilde{X}(t) = x(t, Y)$ satisfies \eqref{state}.
	\end{proof}
	
	\begin{Remark}
		We usually prove the uniqueness of solutions for It\^o's type SDEs by applying It\^o's formula to $|X_1-X_2|^2$ and conducting moment estimates. Unfortunately, evaluating the expectation of a forward integral introduces extra Malliavin derivative terms associated with the state process $X_i$, $i=1,2$, which complicates the analysis substantially.  Russo and Vallois \cite{Russo-Vallois-2000} prove the uniqueness of solutions to forward SDEs via a flow transformation technique where the integrand of the diffusion term only depends on $x$. For ease of reading, we present the uniqueness proof in the Appendix.
	\end{Remark}
 
	Similarly, we can derive the $y$-parameterized performance functional:
	\begin{align*}
		J(u(\cdot)) &= \mathbb{E}\bigg[ \int_{0}^{T} f\big(t, X(t), u(t), Y\big) dt + g\big(X(T), Y\big)\bigg]\\
		&=\mathbb{E}\bigg[ \bigg(\int_{0}^{T} f(t, x(t, y), u(t,y), y) dt\bigg)\bigg|_{y=Y} + g\big(x(T,y), y\big)\big|_{y=Y}\bigg]\\
		&=\mathbb{E}\bigg\{\int_{\mathbb{R}} \bigg[\int_{0}^{T} f\big(t, x(t, y), u(t,y), y\big)\delta_Y(y)dt +  g\big(x(T,y), y\big)\delta_Y(y)\bigg]dy\bigg\} \\
		&=\int_{\mathbb{R}}\mathbb{E} \bigg[\int_{0}^{T} f\big(t, x(t, y), u(t,y), y\big)\mathbb{E}[\delta_Y(y)|\mathcal{F}_t]dt +  g\big(x(T,y), y\big)\mathbb{E}[\delta_Y(y)|\mathcal{F}_T]\bigg]dy.
	\end{align*}
	Denote the new performance functional and admissible control set as follows:
	\begin{align}
		&\widehat{J}(u(\cdot, y)) = \mathbb{E} \bigg[\int_{0}^{T} f\big(t, x(t, y), u(t,y), y\big)\mathbb{E}[\delta_Y(y)|\mathcal{F}_t]dt +  g\big(x(T,y), y\big)\mathbb{E}[\delta_Y(y)|\mathcal{F}_T]\bigg],\label{y_performance}\\
		&\mathcal{A} := \Bigl\{u(t, y)\bigm|u(t):[0, T]\times\mathbb{R}\times\Omega \rightarrow \mathbb{R} \text{ is } \mathcal{F}_t \text{-progressive measurable}, \nonumber\\
		&\qquad\qquad\qquad\sup\limits_{0 \leq t \leq T} \mathbb{E}|u(t, Y)|^i < \infty, i = 1, 2, ...\Bigr\}.
	\end{align}
	Then the original problem is transformed into the following one.

	\noindent\textbf{$y$-parameterized Problem}\quad {\it For each $y \in \mathbb{R}$, find $u^*(\cdot, y) \in \mathcal{A}$ satisfying \eqref{y_state} such that \eqref{y_performance} is maximized, i.e.,
	\begin{align*}
		\widehat{J}(u^*(\cdot, y)) = \sup\limits_{u \in \mathcal{A}} \widehat{J}(u(\cdot, y)).
	\end{align*}}
	 
\section{A Pontryagin’s maximum principle}

	Let $\varphi(\cdot) \in L^2(0, T_0)$ be deterministic and assume that $\|\varphi\|^2_{[t, T]} := \int_t^T \varphi(s)^2 ds > 0$. Let $Y$ be a Gaussian random variable of the form 
	\begin{align}
		Y = Y(T_0),\quad \text{where } Y(t) = \int_0^t \varphi(s)dB(s), \quad \text{for } t \in [0, T_0].\label{Y}
	\end{align}
	\cite{Aase-Oksendal-Uboe-2001} showed that its Donsker delta function is given by
	\begin{align}
		\delta_Y(y) = (2\pi \Vert \varphi \Vert^2_{[0,T_0]})^{-\frac{1}{2}}\exp^{\diamond }\left[-\frac{(Y-y)^{\diamond 2}}{2\Vert \varphi \Vert^2_{[0,T_0]}}\right],\label{Y_donsker}
	\end{align}
	where the symbol $\diamond$ denotes the Wick product. The corresponding conditional expectations read
	\begin{align}
		\mathbb{E}\big[\delta_Y(y)|\mathcal{F}_t\big]&= \left(2\pi \Vert \varphi \Vert^2_{[t,T_0]}\right)^{-\frac{1}{2}}
		\exp\left[-\frac{(Y(t)-y)^2}{2\Vert \varphi \Vert^2_{[t,T_0]}}\right],\label{EY_donsker}\\
		\mathbb{E}\big[D_t\delta_Y(y)|\mathcal{F}_t\big]&= -\left(2\pi \Vert \varphi \Vert^2_{[t,T_0]}\right)^{-\frac{1}{2}}
		\exp\left[-\frac{(Y(t)-y)^2}{2\Vert \varphi \Vert^2_{[t,T_0]}}\right]
		\frac{Y(t)-y}{\Vert \varphi \Vert^2_{[t,T_0]}}\varphi(t),\label{EDY_donsker}
	\end{align}
	where $\mathbb{E}\big[\delta_Y(y)|\mathcal{F}_t\big]$ is positive and uniformly bounded on $[0,T]$, $\mathbb{E}\big[D_t\delta_Y(y)|\mathcal{F}_t\big]\in L^2_{\mathcal{F}_t}(\Omega, \mathbb{R})$. For details, see \cite{Aase-Oksendal-Uboe-2001}, Proposition 3.2, Lemma 3.7 and 3.8, \cite{Draouil-Oksendal-2015}, Section 2.
	
	It is not difficult to obtain the following maximum principle for the $y$-parameterized Problem from the classical result of Peng \cite{Peng-1990}. Thus, we omit the detailed deduction and only state the main result. Define the Hamiltonian function $H: \Omega \times [0, T] \times \mathbb{R} \times U \times \mathbb{R} \times \mathbb{R} $ by
	\begin{align*}
		H(t, x, u, p, q) = f(t, x, u, y)\mathbb{E}\big[\delta_Y(y)|\mathcal{F}_t\big] + b(t, x, u, y)p + \sigma(t, x, u, y)q.
	\end{align*}
	For a given admissible control $u \in \mathcal{A}$, we let $x$ be the corresponding state trajectory. The adjoint $\mathcal{F}_t$-adapted process pair $\big(p(\cdot, y), q(\cdot, y)\big)$ satisfies
	\begin{align}
		\begin{cases}
			-dp(t, y) =\bigg[b_x(t, x(t, y) u(t, y), y)^\mathrm{T}p(t, y) + \sigma_x(t, x(t, y) u(t, y), y)^\mathrm{T}q(t, y) \\
			\qquad \qquad \qquad+ f_x(t, x(t,y), u(t,y), y)\mathbb{E}\big[\delta_Y(y)|\mathcal{F}_t\big]\bigg]dt -q(t, y)dB(t),\\
			p(T, y) = g_x(x(T, y))\mathbb{E}\big[\delta_Y(y)|\mathcal{F}_T\big].
		\end{cases}\label{adjoint}
	\end{align}
	\begin{mypro}
		Suppose that \textbf{(H1)} holds, for any $y \in \mathbb{R}$, let $u^*(\cdot, y)$ be the optimal control for the \textbf{Problem}, $x^*(\cdot, y)$ be the corresponding optimal state. Then 
		\begin{align}
			H_u(t, x^*(t,y), u^*(t,y), p(t,y), q(t,y))\big(u-u^*(t,y) \big) \leq 0, \quad a.e.\ t \in [0, T], 
		\end{align}
		holds for any $u \in U$, a.e., a.s., where $(p(\cdot), q(\cdot))$ is the solution of \eqref{adjoint}.
	\end{mypro}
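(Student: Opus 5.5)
Fix $y\in\mathbb{R}$ throughout. Since $U$ is convex, this is a convex-variation maximum principle for the $y$-parameterized Problem, which is a standard Itô control problem with random, $\mathbb{F}$-adapted coefficients. The only nonstandard features are the weights $\rho(t):=\mathbb{E}[\delta_Y(y)|\mathcal{F}_t]$ in the running cost and $\rho(T)$ in the terminal cost. I would therefore follow Peng's \cite{Peng-1990} argument in its convex (first-order) form, which needs no second-order adjoint.

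\textbf{Step 1: perturbation and variational equation.} For $v(\cdot,y)\in\mathcal{A}$ with values in $U$ and $\varepsilon\in[0,1]$, set $u^\varepsilon=u^*+\varepsilon(v-u^*)\in\mathcal{A}$; this is admissible by convexity of $U$. Let $x^\varepsilon$ be the corresponding solution of \eqref{y_state}. Define $\xi$ as the solution of the linear SDE
\[
d\xi=(b_x\xi+b_u(v-u^*))dt+(\sigma_x\xi+\sigma_u(v-u^*))dB,\qquad \xi(0)=0,
\]
with all coefficients evaluated along $(x^*,u^*)$. Using (H1), Burkholder--Davis--Gundy and Gronwall, I would show $\mathbb{E}\sup_t|x^\varepsilon-x^*|^2=O(\varepsilon^2)$ and $\mathbb{E}\sup_t|(x^\varepsilon-x^*)/\varepsilon-\xi|^2\to0$. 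Here I need $u$-differentiability of $b,\sigma,f$, which is implicit in the statement through $H_u$; I would assume it with linear growth and continuity analogous to (H1). The polynomial moment condition in $\mathcal{A}$ makes all moments finite.

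\textbf{Step 2: first-order expansion of the cost.} Since $u^*$ is optimal, $0\ge \lim_{\varepsilon\to0}\varepsilon^{-1}(\widehat J(u^\varepsilon)-\widehat J(u^*))$. I would compute this limit as
\[
\mathbb{E}\Big[\int_0^T\rho(t)\big(f_x\xi+f_u(v-u^*)\big)dt+\rho(T)g_x(x^*(T,y),y)\xi(T)\Big].
\]
Passing to the limit requires dominated convergence. This is available because $\rho$ is positive and uniformly bounded on $[0,T]$ (stated after \eqref{EY_donsker}), and $f_x,g_x$ have linear growth.

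\textbf{Step 3: duality.} I would apply Itô's formula to $p\xi$, with $(p,q)$ solving \eqref{adjoint}. This BSDE is linear with bounded $b_x,\sigma_x$ and an $L^2$ terminal value and driver, so it has a unique solution. Taking expectations, the stochastic integrals vanish since they are true martingales by the moment bounds. This gives
\[
\mathbb{E}[p(T)\xi(T)]=\mathbb{E}\int_0^T\big(-\rho f_x\xi+p\,b_u(v-u^*)+q\,\sigma_u(v-u^*)\big)dt.
\]
Substituting into Step 2 cancels the $\xi$ terms and yields
\[
\mathbb{E}\int_0^T H_u(t,x^*,u^*,p,q)(v-u^*)\,dt\le0\quad\text{for all admissible }v.
\]

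\textbf{Step 4: localization.} For $u\in U$, a set $A\in\mathcal{F}_t$, and $\theta>0$, take $v=u\mathbf{1}_{A\times[t,t+\theta]}+u^*\mathbf{1}_{(A\times[t,t+\theta])^c}$. Dividing by $\theta$ and applying Lebesgue differentiation gives $\mathbb{E}[H_u(t)(u-u^*(t))\mathbf{1}_A]\le0$ for a.e. $t$. Since $H_u(t)(u-u^*(t))$ is $\mathcal{F}_t$-measurable, this gives the pointwise a.s. inequality. I then use a countable dense subset of $U$ and continuity in $u$ to handle all $u\in U$ simultaneously.

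I expect the main obstacle to be the integrability bookkeeping in Steps 2–3. The weight $\rho(T)$ multiplies $g_x$ in the terminal condition, and one must check that $p\xi$ has integrable martingale parts. Boundedness of $\rho$ makes this routine, so the argument reduces to Peng's classical one.
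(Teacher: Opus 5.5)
Your proposal is correct and follows essentially the paper's route. The paper omits the derivation and simply invokes the classical maximum principle (Peng) for the $y$-parameterized It\^o problem with the bounded random weights $\mathbb{E}[\delta_Y(y)|\mathcal{F}_t]$; since $U$ is convex and only the local condition $H_u\,(u-u^*)\le 0$ is claimed, your first-order convex-variation argument is the natural direct proof of that statement, without spike variations or a second-order adjoint, given the $u$-differentiability of $b,\sigma,f$ that you rightly note must be added to (H1).

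One small tightening in Step 4: the set of Lebesgue points depends on $A$, yet $A$ must lie in $\mathcal{F}_t$ for that same $t$, so the argument as written does not quite close. It is cleaner to choose $v=u\mathbf{1}_E+u^*\mathbf{1}_{E^c}$ with the progressive set $E=\{(s,\omega): H_u(s)(u-u^*(s))>0\}$. This gives $\mathbb{E}\int_0^T \big(H_u(s)(u-u^*(s))\big)^+ds\le 0$, and hence the claim directly.
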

	
	We then derive Arrow’s sufficient optimality condition, referring to the work of Wang and Yu \cite{WangGC-YuZY-2010}, who proved Arrow-type sufficient optimality condition for a nonzero sum BSDE stochastic differential game. Shi and Wang \cite{Shi-Wang-2016} extended this research by deriving the corresponding sufficient optimality condition for BSDE games with delayed generators.
	\begin{mypro}
		Let \textbf{(H1)} hold and $u^*(\cdot,y) \in \mathcal{A}$ be given. Suppose that $x^*(\cdot,y)$, $\big(p(\cdot, y), q(\cdot, y)\big)$ are the corresponding solutions to \eqref{y_state} and \eqref{adjoint}. Suppose that
		\begin{align}
			H(t, x^*(t,y), u^*(t,y), p(t,y), q(t,y)) = \max\limits_{u \in U} H(t, x^*(t,y), u, p(t,y), q(t,y)) \label{sufficient1}
		\end{align}
		holds for all $t \in [0, T]$, and  
		\begin{align}
			\widehat{H}(t, x) = \max\limits_{u \in U} H(t, x, u, p(t,y), q(t,y)) \label{sufficient2}
		\end{align}
		exists and is concave in $x$, $g$ is concave in $x$. Then $u^*(\cdot, y)$ is optimal.
	\end{mypro}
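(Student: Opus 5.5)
Fix $y\in\mathbb{R}$ and an arbitrary $u(\cdot,y)\in\mathcal{A}$ with state $x(\cdot,y)$ solving \eqref{y_state}. I will show $\widehat{J}(u^*(\cdot,y))-\widehat{J}(u(\cdot,y))\ge 0$ with the standard Arrow-type duality argument, as in Wang and Yu \cite{WangGC-YuZY-2010}. Abbreviate $\rho_t:=\mathbb{E}[\delta_Y(y)|\mathcal{F}_t]$, which is positive and bounded by \eqref{EY_donsker}. Write $H^*(t)=H(t,x^*(t,y),u^*(t,y),p(t,y),q(t,y))$, $H(t)=H(t,x(t,y),u(t,y),p(t,y),q(t,y))$, and similarly $b^*,\sigma^*,b,\sigma$. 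Since $f\rho_t=H-bp-\sigma q$, the difference splits as
\begin{align*}
\widehat{J}(u^*)-\widehat{J}(u)=\mathbb{E}\int_0^T\big[H^*(t)-H(t)-(b^*-b)p-(\sigma^*-\sigma)q\big]dt+\mathbb{E}\big[(g(x^*(T,y),y)-g(x(T,y),y))\rho_T\big].
\end{align*}

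For the terminal term, I use concavity of $g$ in $x$ and $\rho_T>0$ to get $(g(x^*(T))-g(x(T)))\rho_T\ge g_x(x^*(T))\rho_T(x^*(T)-x(T))=p(T,y)(x^*(T)-x(T))$ by the terminal condition in \eqref{adjoint}. I then apply Itô's formula to $p(t,y)(x^*(t,y)-x(t,y))$ on $[0,T]$ (the initial values coincide). Taking expectations, the $dB$ terms drop out: they are true martingales, because (H1) gives linear growth of $b,\sigma$, $u\in\mathcal{A}$ has all moments, and $\rho_t$ is bounded, so $p,q$ and the states are square integrable. This yields
\begin{align*}
\mathbb{E}[p(T)(x^*(T)-x(T))]=\mathbb{E}\int_0^T\big[(b^*-b)p+(\sigma^*-\sigma)q-H_x^*(t)(x^*-x)\big]dt,
\end{align*}
where I used that the drift of $-p$ is exactly $H_x(t,x^*,u^*,p,q)$. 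Substituting, the $(b^*-b)p$ and $(\sigma^*-\sigma)q$ terms cancel, leaving
\begin{align*}
\widehat{J}(u^*)-\widehat{J}(u)\ge\mathbb{E}\int_0^T\big[H^*(t)-H(t)-H_x^*(t)(x^*(t)-x(t))\big]dt.
\end{align*}

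It remains to show the integrand is nonnegative, which is the step where the concavity of $\widehat{H}$ enters. By \eqref{sufficient1}, $H^*(t)=\widehat{H}(t,x^*(t))$, and by definition \eqref{sufficient2}, $H(t)\le\widehat{H}(t,x(t))$. Hence the integrand is at least $\widehat{H}(t,x^*)-\widehat{H}(t,x)-H_x^*(t)(x^*-x)$. The obstacle is identifying $H_x^*$ with a supergradient of $\widehat{H}(t,\cdot)$ at $x^*$, since $\widehat{H}$ need not be differentiable. I will handle this in the standard way. The function $\phi(x):=\widehat{H}(t,x)-H(t,x,u^*(t),p,q)$ is nonnegative and vanishes at $x=x^*(t)$, so it attains its minimum there. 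The map $x\mapsto H(t,x,u^*,p,q)$ is $C^1$, and $\widehat{H}$ is concave. Therefore, for any supergradient $a$ of $\widehat{H}(t,\cdot)$ at $x^*$, the one-sided derivatives of $\phi$ at $x^*$ force $a=H_x^*(t)$; in other words, $\widehat{H}$ is differentiable at $x^*$ with derivative $H_x^*$. Concavity then gives $\widehat{H}(t,x)\le\widehat{H}(t,x^*)+H_x^*(t)(x-x^*)$, so the integrand is $\ge0$. This gives $\widehat{J}(u^*)\ge\widehat{J}(u)$ for every $u\in\mathcal{A}$ and each $y$, so $u^*(\cdot,y)$ is optimal.
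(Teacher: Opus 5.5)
Your proof is correct and follows essentially the same route as the paper. The shared steps are: the splitting of $\widehat{J}(u^*)-\widehat{J}(u)$ via the Hamiltonian; the duality identity from It\^o's formula applied to $p(x^*-x)$, combined with concavity of $g$ (where you rightly make explicit that $\mathbb{E}[\delta_Y(y)|\mathcal{F}_T]>0$ is needed); and the identification of $H_x$ at $(x^*,u^*)$ with a supergradient of $\widehat{H}$. Your one-sided-derivative argument for $\widehat{H}(t,\cdot)-H(t,\cdot,u^*,p,q)$ is a repackaging of the paper's auxiliary function $\gamma$, and it gives the slightly sharper conclusion that $\widehat{H}(t,\cdot)$ is differentiable at $x^*(t,y)$.
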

	\begin{proof}
		Let $x(\cdot,y )$ be the corresponding state process to $u(\cdot) \in \mathcal{U}$, we need to prove that
		\begin{align*}
			\widehat{J}\big(u^*(\cdot, y)\big) - \widehat{J}\big(u(\cdot,y)\big) = \mathrm{I} + \mathrm{II} \geq 0,
		\end{align*}
		where
		\begin{align*}
			\mathrm{I}
			&:= \mathbb{E}\int_0^T \Big[f\big(t, x^*(t,y), u^*(t,y), y\big) - f\big(t, x(t,y), u(t,y), y\big)\Big]\mathbb{E}\big[\delta_Y(y)|\mathcal{F}_t\big]dt,\\
			\mathrm{II}
			&:= \mathbb{E}\Big[ \big( g\big(x^*(T)\big) - g\big(x(T)\big)\big)\mathbb{E}\big[\delta_Y(y)|\mathcal{F}_T\big]\Big].
		\end{align*}
		Denote by $\phi(t) = \phi(t, x^*(t,y), u^*(t,y),y), \phi = b, \sigma, f, b_x, \sigma_x, f_x$ and $\alpha(t) = x^*(t,y) - x(t,y)$, then $\alpha(t)$ is the unique solution to 
		\begin{align}
			\begin{cases}
				d\alpha(t) = \big[b_x(t)\alpha(t) + \beta(t)\big]dt + \big[\sigma_x(t)\alpha(t)  + \rho(t)\big]dB(t),\\
				\alpha(0) = 0,
			\end{cases}
		\end{align}
		with 
		\begin{align*}
			\beta(t) &:= -b_x(t)\alpha(t) + b(t) - b(t, x(t,y), u(t,y),y),\\
			\rho(t) &:= -\sigma_x(t)\alpha(t) + \sigma(t) -\sigma(t, x(t,y), u(t,y),y).
		\end{align*}
		Applying It\^o's formula to $ p(\cdot,y)\big(x^*(\cdot,y) - x(\cdot,y)\big)$, we can derive
		\begin{align*}
			&\mathbb{E}\Big[ g_x(x^*(T,y))\mathbb{E}\big[\delta_Y(y)|\mathcal{F}_T\big]\big( x^*(T,y)-x(T,y) \big)\Big]\\
			&=\mathbb{E}\int_0^T\Big[
				- f_x(t)\mathbb{E}\big[\delta_Y(y)|\mathcal{F}_t\big] \alpha(t) -  p(t,y)b_x(t)\alpha(t) -  q(t,y)\sigma_x(t)\alpha(t) \nonumber\\
				&\qquad+ p(t,y)\big(b(t)-b(t, x(t,y),u(t,y),y) \big) +  q(t,y)\big( \sigma(t)-\sigma(t, x(t,y),u(t,y),y) \big)\Big]dt\\
			&=\mathbb{E}\int_0^T\Big[
				-  H_x(t, x^*(t,y),u^*(t,y),p(t,y),q(t,y))\alpha(t)\nonumber\\
				&\qquad+ p(t,y)\big(b(t)-b(t, x(t,y),u(t,y),y)\big) +  q(t,y) \big(\sigma(t)-\sigma(t, x(t,y),u(t,y),y) \big)\Big]dt.
		\end{align*}
		Since $g$ is concave in $x$, we have
		\begin{align*}
			\mathrm{II} \geq \mathbb{E}\Big[ g_x(x^*(T,y))\mathbb{E}\big[\delta_Y(y)|\mathcal{F}_T\big] \big( x^*(T,y)-x(T,y) \big)\Big].
		\end{align*}
		Then
		\begin{align*}
			&\widehat{J}\big(u^*(\cdot, y)\big) - \widehat{J}\big(u(\cdot,y)\big)\\
			&\quad \geq \mathbb{E} \int_0^T \bigg[ H(t,x^*(t,y),u^*(t,y),p(t,y),q(t,y)) - H(t,x(t,y),u(t,y),p(t,y),q(t,y)) \\
			&\qquad -  H_x(t, x^*(t,y),u^*(t,y),p(t,y),q(t,y))\alpha(t) \bigg]dt.
		\end{align*}
		Under conditons \eqref{sufficient1}) and \eqref{sufficient2}, for each $(t, x, y)$
		\begin{align}
			&H(t, x^*(t,y), u^*(t,y),p(t,y),q(t,y)) - H(t, x, u(t,y),p(t,y),q(t,y)) \nonumber \\
			&\qquad  \geq \widehat{H}(t, x^*(t,y)) - \widehat{H}(t, x).
		\end{align}
		Fix $(t, y)$. Since $\widehat{H}$ is concave in $x$, there exists a supergradient $\theta(t) \in \mathbb{R}$ for $\widehat{H}$ at $x^*(t,y)$, such that for all $x \in \mathbb{R}$
		\begin{align}
			\widehat{H}(t, x) - \widehat{H} (t, x^*(t,y)) - \theta(t)\big( x-x^*(t,y) \big) \leq 0. \label{hat_H}
		\end{align}
		Define 
		\begin{align*}
			&\gamma(t, x) \\
			&:= H(t, x, u^*(t,y), p(t,y), q(t,y)) - H(t, x^*(t,y), u^*(t,y),p(t,y),q(t,y))  - \theta(t)\big( x-x^*(t,y) \big)\\
			&\leq\widehat{H}(t, x) - \widehat{H}(t, x^*(t,y)) - \theta(t)\big( x-x^*(t,y) \big),
		\end{align*}
		with \eqref{hat_H}, $\gamma(t, x) \leq 0$ and $\gamma(t, x^*(t,y)) = 0$. Thus, $x^*(t, y)$ is the maximum point of $\gamma$. Because $\gamma$ is differentiable in $x$, we have $\gamma_x(t, x^*(t,y)) =0$, which means
		\begin{align*}
			H_x(t, x^*(t,y), u^*(t,y), p(t,y), q(t,y)) = \theta(t).
		\end{align*}
		Therefore
		\begin{align*}
			&\widehat{J}\big(u^*(\cdot, y)\big) - \widehat{J}\big(u(\cdot,y)\big)\\
			&\quad \geq \mathbb{E} \int_0^T \Big[\widehat{H}(t, x^*(t,y)) - \widehat{H}(t, x(t,y)) - \theta(t)\big( x^*(t,y) - x(t,y) \big)\Big] \geq 0.
		\end{align*}
        The proof is complete.
	\end{proof}

\section{LQ case under inside information}

	Let us consider the following linear system: 
	\begin{align}
		\begin{cases}
			d^{-}X(t) = \big[A(t)X(t) + B(t)u(t, Y) \big]dt + \big[C(t)X(t) + D(t)u(t, Y) \big]d^{-}B(t),\\
			X(0) = x_0,  \quad t \in [0, T].
		\end{cases}\label{lqstate}
	\end{align}
    We wish to choose a $u(\cdot,Y) \in \mathcal{U}$ to maximize the quadratic performance functional given by
	\begin{align}
		J(u(\cdot)) = -\frac{1}{2} \mathbb{E} \bigg\{\int_0^T \Big[Q(t)(X(t))^2 + R(t)(u(t, Y))^2\Big]dt + G(X(T, Y))^2\bigg\}.\label{lqcost}
	\end{align}
	The following assumption is needed.	
	
	\textbf{(H2)} $A(\cdot), B(\cdot), C(\cdot), D(\cdot)$ are deterministic, continuous functions on $[0, T]$; $Q(\cdot), R(\cdot)$ are deterministic, and continuous with respect to $t \in [0,T]$;  $Q(\cdot), G \geq 0$ and $R(\cdot) \gg 0$. 
	
	The problem can be written in detail as follows.
	
	\noindent\textbf{Problem (LQ-II)} Find $u^*(t, Y) \in \mathcal{U}$ satisfying \eqref{lqstate} such that \eqref{lqcost} achieves the maximum, namely
	\begin{align*}
		J(u^*(\cdot, Y)) = \sup\limits_{u \in \mathcal{U}} J(u(\cdot, Y)).
	\end{align*}

	Firstly, using the Donsker delta function of $Y$, we construct the $y$-parameterized state equation and the associated performance functional for the aforementioned LQ problem. Let $x(t) := x(t, y), u(t) :=u(t, y)$, then
	\begin{align}
		\begin{cases}
			dx(t) = \big[A(t)x(t) + B(t)u(t) \big]dt + \big[C(t)x(t) + D(t)u(t)\big]dB(t),\\
			x(0) = x_0,
		\end{cases} \label{y-lqstate}
	\end{align}
	\begin{align}
		\widehat{J}(u(\cdot)) = -\frac{1}{2} \mathbb{E} \bigg\{
			\int_0^T \Big[ Q(t)(x(t))^2 + R(t)(u(t))^2\Big]\mathbb{E}[\delta_Y(y)|\mathcal{F}_t]dt + G(x(T))^2\mathbb{E}[\delta_Y(y)|\mathcal{F}_T]\bigg\}.\label{y-lqcost1}
	\end{align}
	The original system is converted into the classical SDE presented above. Under the preceding assumption, equation \eqref{y-lqstate} admits a unique solution for any given admissible control $u(\cdot)$ and fixed $y$. This transforms our original problem into an LQ optimal control problem with random coefficients, where the randomness stems from the conditional expectations $\mathbb{E}[\delta_Y(y)|\mathcal{F}_t]$ and $\mathbb{E}[\delta_Y(y)|\mathcal{F}_T]$. Denote $\widehat{Q} = Q\mathbb{E}[\delta_Y(y)|\mathcal{F}_t]$, $\widehat{R} = R\mathbb{E}[\delta_Y(y)|\mathcal{F}_t]$, $\widehat{G} = G\mathbb{E}[\delta_Y(y)|\mathcal{F}_T]$, then $\widehat{Q}, \widehat{R}, \widehat{G}$ are bounded random quantities, and $\widehat{Q}, \widehat{G} \geq 0$, $\widehat{R} > 0$. Then we rewrite \eqref{y-lqcost1} as
	\begin{align}
		\widehat{J}(u(\cdot)) = -\frac{1}{2} \mathbb{E} \bigg\{
			\int_0^T \Big[\widehat{Q}(t)(x(t))^2 + \widehat{R}(t)(u(t))^2\Big]dt + \widehat{G}(x(T))^2\bigg\}.\label{y-lqcost2}
	\end{align}

	We can write the Hamiltonian function
	\begin{align}
		H(t, x, u, p, q, y) =& \big[A(t)x(t) + B(t)u(t) \big]p(t) + [C(t)x(t) + D(t)u(t) ]q(t) \\ \nonumber
		&- \frac{1}{2} \Big[\widehat{Q}(t)(x(t))^2 + \widehat{R}(t)(u(t))^2\Big].
	\end{align}
	Therefore, by Proposition 3.1, if $u^*(\cdot) = u^*(\cdot,y)$ is an optimal control, then we have
	\begin{align}
		B(t)p(t) + D(t)q(t) - \widehat{R}(t)u^*(t) = 0.\label{y_u*1}
	\end{align}
	Here $(p(\cdot), q(\cdot))$ is the $\mathcal{F}_t$-adapted solution of the following adjoint BSDE:
	\begin{align}
		\begin{cases}
			-dp(t) = \big[A(t)p(t) + C(t)q(t) - \widehat{Q}(t)x^*(t)\big]dt - q(t)dB(t),\\
			p(T) = - \widehat{G}x^*(T).
		\end{cases}\label{y_p}
	\end{align}
	We then apply the four-step approach and introduce a SRE to derive the explicit closed-form expression for the optimal control $u^*(\cdot)$. In view of the terminal condition in \eqref{y_p}, we set
	\begin{align}
		- p(t) = P(t)x^*(t) , \quad t \in [0, T],\label{y_p1}
	\end{align}
	for some $\mathbb{F}$-adapted process pair $(P(\cdot), \Lambda(\cdot))$ satisfying the BSDE:
	\begin{align}
		dP(t) = -F(t)dt + \Lambda(t)dB(t), \quad P(T) = \widehat{G},\label{y-riccati}
	\end{align}
	where $F(\cdot)$ will be determined later. Applying It\^o's formula to \eqref{y_p1}, we obtain
	\begin{align}
		- dp(t) = \big[-Fx^* + PAx^* + PBu^* + \Lambda\big(Cx^* + Du^*\big)\big]dt + \big[P\big(Cx^* + Du^*) +\Lambda x^* \big]dB(t).\label{y_p2}
	\end{align}
	Comparing \eqref{y_p} with \eqref{y_p2}, we get
	\begin{align}
		0 &= -Fx^* + PAx^* + PBu^* + \Lambda\big(Cx^* + Du^*\big) - Ap - Cq + \widehat{Q}x^*, \label{y_p3}\\
		q &= -\big[P\big(Cx^* + Du^*) +\Lambda x^* \big], \quad a.s.\label{y_q}
	\end{align}
	Substituting \eqref{y_p1} and \eqref{y_q} into \eqref{y_u*1}, we obtain
	\begin{align*}
		BPx^* + D\big[P\big(Cx^* + Du^*) +\Lambda x^* \big] + \widehat{R}u^* 
		= \big( \widehat{R} + D^2P\big)u^* + \big[BP + D\big(PC+\Lambda\big)\big]x^* 
		= 0.
	\end{align*}
	Assume that $\widetilde{R}^{-1} \triangleq \big(\widehat{R}  + D^2P\big)^{-1}$ exists, then
	\begin{align}
		u^* =  - \widetilde{R}^{-1}\big[BP + D\big(PC+\Lambda\big)\big]x^*,\quad a.e.\ t \in [0, T],\ a.s.\label{y_u*2}
	\end{align}
	Substituting \eqref{y_p1}, \eqref{y_q}, and \eqref{y_u*2} into \eqref{y_p3}, we conclude that $P(\cdot)$ satisfies the following BSDE:
	\begin{align}
		\begin{cases}
			dP(t) = -\Big[2PA + 2C\Lambda + PC^2 +\widehat{Q} - \widetilde{R}^{-1}\big[BP+ D\big(PC+\Lambda\big)\big]^2\Big]dt + \Lambda dB(t),\\
			P(T) = \widehat{G},\quad t \in [0,T].
		\end{cases}\label{y_P}
	\end{align}
	\begin{Remark}
		Sun et al. \cite{Sun-Xiong-Yong-2021} proved that under the assumptions of bounded coefficients and uniform convexity of the cost functional with respect to the control, i.e., $J(t,0,u)\geq \delta\|u\|^2$, for some constant $\delta>0$, the aforementioned SRE admits a unique solution $(P, \Lambda) \in L_{\mathbb{F}}^{\infty}([0,T]; \mathbb{S}) \times L_{\mathbb{F}}^2([0,T];\mathbb{S})$, satisfying $\widetilde{R} \geq \lambda$, for some constant $\lambda>0$, a.e. $t\in[0,T]$.
	\end{Remark}
	
	Substituting \eqref{y_u*2} into \eqref{y-lqstate}, we derive the following closed-loop system:
	\begin{align}
		\begin{cases}
			dx^*(t) = \Big\{A(t) - B(t)\widetilde{R}^{-1}(t)\big[B(t)P(t) + D(t)\big(P(t)C(t)+\Lambda(t)\big)\big]\Big\}x^*(t)dt\\
			\qquad \qquad + \Big\{C(t) - D(t)\widetilde{R}^{-1}(t)\big[B(t)P(t) + D(t)\big(P(t)C(t)+\Lambda(t)\big)\big] \Big\}x^*(t)dB(t),\\
			x^*(0) = x_0.
		\end{cases}\label{y_x*}
	\end{align}
	We summarise the above results in the following statement.
	\begin{mythm}
		Let \textbf{(H2)} hold. Assume $\widehat{R}>\delta$, for some constant $\delta > 0$. Then \eqref{y_P} admits a unique adapted solution $(P(\cdot), \Lambda)$ such that $\widetilde{R}^{-1}$ is bounded, for each fixed $y\in\mathbb{R}$; the optimal feedback control $u^*(t, Y)$ for problem (LQ-II) reads
		\begin{align}
			u^*(t, Y) = \int_{\mathbb{R}} u^*(t, y)\delta_Y(y) dy,
		\end{align}
		where $u^*(\cdot, y)$ is given by \eqref{y_u*2} with $x^*(\cdot)$ being the solution of \eqref{y_x*}. Furthermore, the optimal performance is 
		\begin{align}
			\widehat{J}(u^*(\cdot, y)) = - \frac{1}{2}\mathbb{E}\big[P(0)x_0^2\big].\label{y_optimal_cost}
		\end{align}
	\end{mythm}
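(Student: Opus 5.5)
Fix $y\in\mathbb{R}$; the argument has three parts. First, \eqref{y_P} is well posed. Second, $u^*(\cdot,y)$ from \eqref{y_u*2} is optimal for the $y$-parameterized LQ problem \eqref{y-lqstate}--\eqref{y-lqcost2}, with value \eqref{y_optimal_cost}. Third, substituting $y=Y$ via the Donsker delta function gives the optimal control of Problem (LQ-II).

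\textbf{Step 1: solvability of the SRE.} Under (H2) and \eqref{EY_donsker}, the weights $\mathbb{E}[\delta_Y(y)|\mathcal{F}_t]$ are positive and bounded. Hence $\widehat{Q}\ge0$, $\widehat{G}\ge0$ and $\widehat{R}$ are bounded $\mathbb{F}$-adapted coefficients, with $\widehat{R}>\delta$ by assumption. Since $\widehat{Q},\widehat{G}\ge0$, the cost functional satisfies $-\widehat{J}(u)\ge \frac{\delta}{2}\mathbb{E}\int_0^T u^2dt$ when $x_0=0$, which is the uniform convexity condition. The result of Sun et al.\ quoted in the Remark above then yields a unique solution $(P,\Lambda)\in L^\infty_{\mathbb{F}}\times L^2_{\mathbb{F}}$ of \eqref{y_P} with $\widetilde{R}=\widehat{R}+D^2P\ge\lambda>0$. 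So $\widetilde{R}^{-1}$ is bounded, and the closed-loop equation \eqref{y_x*} is a linear SDE whose coefficients contain $\Lambda\in L^2$.

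\textbf{Step 2: optimality by completing the square.} This is the main obstacle, because $\Lambda$ is only square integrable, not bounded. For an arbitrary admissible $u$ with state $x$, apply It\^o's formula to $P(t)x(t)^2$ on $[0,T]$ and use \eqref{y_P}. After adding the running cost, one should obtain
\begin{align*}
-2\widehat{J}(u)=\mathbb{E}\big[P(0)x_0^2\big]+\mathbb{E}\int_0^T \widetilde{R}\Big(u+\widetilde{R}^{-1}\big[BP+D(PC+\Lambda)\big]x\Big)^2dt .
\end{align*}
The local martingale term must vanish in expectation. I would handle this by localizing with stopping times $\tau_n\uparrow T$ and passing to the limit. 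The limit uses $P\in L^\infty$ and the moment bounds on $x$ and $u$ from the admissible class. Justifying dominated convergence for the $\Lambda x^2$-type terms is the delicate point. If that fails, I would instead invoke directly the closed-loop representation in Sun--Xiong--Yong, which contains precisely this identity.

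Since $\widetilde{R}>0$, the identity gives $\widehat{J}(u)\le-\frac12\mathbb{E}[P(0)x_0^2]$. Equality holds iff $u$ satisfies the feedback \eqref{y_u*2}, with $x^*$ solving \eqref{y_x*}. It remains to check that this feedback control is admissible, i.e.\ has finite moments; this again comes from the cited theory, or from the fact that the identity forces $\mathbb{E}\int_0^T|u^*|^2dt<\infty$. This proves both the optimality of $u^*(\cdot,y)$ and formula \eqref{y_optimal_cost}. As a consistency check, the feedback agrees with the stationarity condition \eqref{y_u*1} obtained from Proposition 3.1 together with the decoupling \eqref{y_p1}.

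\textbf{Step 3: back to the inside-information problem.} By Proposition 2.1 and Lemma \ref{property}, $X^*(t)=x^*(t,Y)$ solves \eqref{lqstate} under the control $u^*(t,Y)=\int_{\mathbb{R}}u^*(t,y)\delta_Y(y)dy$, and this control is $\mathcal{H}_t$-progressive. The computation preceding \eqref{y_performance} gives $J(u(\cdot,Y))=\int_{\mathbb{R}}\widehat{J}(u(\cdot,y))dy$ for every $u\in\mathcal{U}$. Maximizing $\widehat{J}$ pointwise in $y$ therefore maximizes $J$. The one remaining check is measurability of $y\mapsto u^*(t,y)$: the map $y\mapsto(P,\Lambda)$ is measurable by continuous dependence of BSDE solutions on parameters, which I would simply cite.
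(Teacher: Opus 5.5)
Your proposal is correct and follows essentially the paper's route. Solvability of \eqref{y_P} with $\widetilde{R}$ bounded below is taken from Sun--Xiong--Yong, as in the paper's Remark, and you additionally verify the uniform convexity hypothesis. Optimality and the value $-\frac{1}{2}\mathbb{E}[P(0)x_0^2]$ come from It\^o's formula for $Px^2$ plus completion of squares. The return to $Y$ is the Donsker delta substitution, which the paper leaves implicit.

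The localization point you flag is harmless. After inserting the drift of \eqref{y_P}, all $\Lambda$-dependent $dt$-terms sit inside the nonnegative square $\widetilde{R}\big(u+\widetilde{R}^{-1}[BP+D(PC+\Lambda)]x\big)^2$. So monotone convergence handles that term, while $P\in L^\infty$ and the moments of $x,u$ handle the rest. The same localized identity, together with $P\ge 0$ (valid since $\widehat{Q},\widehat{G}\ge 0$), makes your bound $\mathbb{E}\int_0^T|u^*|^2dt<\infty$ for the feedback control non-circular.
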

	\begin{proof}
		For any given admissible control $u(\cdot, y)$, let $x(\cdot)$ be the corresponding solution of \eqref{y-lqstate}. Applying It\^o's formula to $\frac{1}{2}P(\cdot)(x(\cdot))^2$, and integrating it from 0 to $T$, it follows that
		\begin{align}
			-\frac{1}{2}\mathbb{E}\Big[\widehat{G}(x(T))^2\Big] 
			&= - \frac{1}{2}\mathbb{E} \int_0^T \bigg[ -F(t)(x(t))^2 + 2P(t)x(t)\big(A(t)x(t)+B(t)u(t)\big) \nonumber\\
			&\quad+P(t)\big(C(t)x(t)+B(t)u(t)\big)^2 + 2\Lambda(t) x(t)\big(C(t)x(t)+B(t)u(t)\big) \bigg]dt \nonumber\\
			&\quad- \frac{1}{2}\mathbb{E}\Big[P(0)(x(0))^2\Big]. \label{Gx^2}
		\end{align}
		In view of \eqref{y-lqstate}, \eqref{y_P}, we substitute \eqref{Gx^2} into \eqref{y-lqcost2} and use the method of completion-of-squares, deriving that 
		\begin{align*}
			\widetilde{J}\Big(u(t,y)\Big) &= - \frac{1}{2} \mathbb{E}\int_0^T\bigg\{
				\left| 
				\widetilde{R}^{-\frac{1}{2}}(t) \Big[\widetilde{R}(t)u(t)+\Big(B(t)P(t) + D(t)\big(P(t)C(t)+\Lambda(t)\big)x(t)\Big)\Big] \right|^2 \bigg\}dt\\
				&\quad- \frac{1}{2}\mathbb{E}\big[P(0)x_0^2\big].
		\end{align*}
		It is obvious that for optimal $u^*(\cdot, y)$ given by \eqref{y_u*2}, the corresponding optimal performance functional is given by \eqref{y_optimal_cost}. Moreover, for any given admissible control $u(\cdot,y)$, we have $\widetilde{J}\big(u(\cdot,y)\big) \leq \widetilde{J}\big(u^*(\cdot, y)\big)$. The proof is complete.
	\end{proof}
	
	Recall that $Y = Y(T_0)$ satisfies equation \eqref{Y}. For any fixed $t \in [0,T]$, the conditional expectation $\mathbb{E}[\delta_Y(y)|\mathcal{F}_t]$ is a positive, bounded random variable,  where $\mathbb{E}\big[D_t\delta_Y(y)|\mathcal{F}_t\big]$ belongs to $L^2_{\mathcal{F}_T}(\Omega,;\mathbb{R})$. Based on the above theorem, we derive the optimal control $u^*$ for the LQ case. In addition, as shown in \O ksendal and Rose \cite{Oksendal-Rose-2017}, $B$ is an $\mathbb{H}$-semimartingale and admits a semimartingale decomposition.
	 
	\begin{Corollary}
		Let $Y$ satisfy \eqref{Y}. Then the $\mathbb{F}$-Brownian motion $B$ is a semimartingale with respect to the filtration $\mathbb{H}$ enlarged by the inside information, and it admits the following semimartingale decomposition:
		\begin{align}
			B(t) = \widetilde{B}(t) + \int_0^t \rho_s(Y) ds,
		\end{align}
		where $\widetilde{B}$ is an $\mathbb{H}$-Brownian motion, and $\rho_s$ (called the information drift) is of the form
		\begin{align}
			\rho_t(Y) = \frac{\mathbb{E}\big[D_t\delta_Y(y)|\mathcal{F}t\big]{y=Y}}{\mathbb{E}\big[\delta_Y(y)|\mathcal{F}t\big]{y=Y}} = \frac{Y(T_0) - Y(t)}{\left\|\varphi \right\|^2_{[t,T_0]}}\varphi(t).
		\end{align}
	\end{Corollary}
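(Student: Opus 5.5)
We show that $\widetilde{B}(t):=B(t)-\int_0^t\rho_s(Y)\,ds$ is a continuous $\mathbb{H}$-martingale with quadratic variation $t$, and then conclude by L\'evy's characterization. Since $\widetilde B$ differs from $B$ by a finite-variation process, $[\widetilde B]_t=[B]_t=t$. It therefore suffices to prove the martingale property and the integrability of the drift.

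\textbf{Step 1: a formula for $\rho_t(Y)$.} By \eqref{EY_donsker} and \eqref{EDY_donsker}, the ratio of the two conditional expectations, evaluated at $y$, is
\begin{align*}
-\frac{Y(t)-y}{\|\varphi\|^2_{[t,T_0]}}\,\varphi(t).
\end{align*}
The Gaussian prefactors cancel. Substituting $y=Y=Y(T_0)$ gives the claimed expression. This step is pure algebra.

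\textbf{Step 2: integrability of the drift.} Write $Y(T_0)-Y(t)=\int_t^{T_0}\varphi\,dB$. This increment is independent of $\mathcal{F}_t$ and has variance $\|\varphi\|^2_{[t,T_0]}$. Hence
\begin{align*}
\mathbb{E}|\rho_t(Y)|\le C\,\frac{|\varphi(t)|}{\|\varphi\|_{[t,T_0]}}.
\end{align*}
Since $t\le T<T_0$ and $\|\varphi\|^2_{[T,T_0]}>0$ by assumption, the denominator is bounded below on $[0,T]$. Because $\varphi\in L^2$, Cauchy--Schwarz gives $\int_0^T\mathbb{E}|\rho_s|\,ds<\infty$.

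\textbf{Step 3: the $\mathbb{H}$-martingale property (main step).} Fix $s<t\le T$. We must show
\begin{align*}
\mathbb{E}\Big[\big(B(t)-B(s)\big)-\int_s^t\rho_r(Y)\,dr \,\Big|\, \mathcal{H}_s\Big]=0 .
\end{align*}
Set $Z:=Y(T_0)-Y(s)$. Then $\mathcal{H}_s=\mathcal{F}_s\vee\sigma(Z)$, and $Z$ is independent of $\mathcal{F}_s$. Both $B(t)-B(s)$ and every $\rho_r$ with $r\ge s$ are measurable with respect to $\sigma\{B(v)-B(s):v\ge s\}$, which is independent of $\mathcal{F}_s$. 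So it suffices to condition on $Z$ alone.

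The pair $\bigl(B(t)-B(s),Z\bigr)$ is jointly Gaussian, hence
\begin{align*}
\mathbb{E}[B(t)-B(s)\mid Z]=\frac{\int_s^t\varphi(r)\,dr}{\|\varphi\|^2_{[s,T_0]}}\,Z.
\end{align*}
For $r\in[s,t]$, write $Y(T_0)-Y(r)=Z-\int_s^r\varphi\,dB$. Gaussian regression gives
\begin{align*}
\mathbb{E}\big[Y(T_0)-Y(r)\,\big|\,Z\big]=Z\,\frac{\|\varphi\|^2_{[r,T_0]}}{\|\varphi\|^2_{[s,T_0]}}.
\end{align*}
Therefore
\begin{align*}
\mathbb{E}[\rho_r(Y)\mid Z]=\frac{\varphi(r)\,Z}{\|\varphi\|^2_{[s,T_0]}}.
\end{align*}
Integrating over $r\in[s,t]$, using Fubini (justified by Step 2), reproduces exactly the conditional mean of the Brownian increment. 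The difference is thus zero, and $\widetilde{B}$ is an $\mathbb{H}$-martingale.

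Right-continuity of $\mathbb{H}$ follows from the convention $\mathcal{H}_t=\mathcal{H}_{t^+}$. Continuity of the paths of $\widetilde{B}$ is clear. L\'evy's theorem then shows that $\widetilde B$ is an $\mathbb{H}$-Brownian motion. Consequently $B=\widetilde B+\int_0^\cdot\rho_s(Y)\,ds$ is an $\mathbb{H}$-semimartingale, which gives the stated decomposition. This agrees with the general result of \cite{Oksendal-Rose-2017}, which could alternatively be invoked directly.

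The main obstacle is the conditioning argument in Step 3: correctly reducing $\mathcal{H}_s$ to $\mathcal{F}_s\vee\sigma(Z)$ and handling the independence.
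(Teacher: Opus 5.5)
Your argument is correct, but it follows a different route from the paper. The paper gives no proof. It simply invokes \O ksendal--Rose \cite{Oksendal-Rose-2017}, whose general result identifies the information drift of any $Y$ with a Donsker delta functional as the ratio $\mathbb{E}[D_t\delta_Y(y)|\mathcal{F}_t]_{y=Y}/\mathbb{E}[\delta_Y(y)|\mathcal{F}_t]_{y=Y}$. This is essentially Jacod's formula: $\mathbb{E}[D_t\delta_Y(y)|\mathcal{F}_t]$ is the Clark--Ocone integrand of the conditional density $\mathbb{E}[\delta_Y(y)|\mathcal{F}_t]$. The explicit Gaussian drift then follows by substituting \eqref{EY_donsker}--\eqref{EDY_donsker}, exactly as in your Step 1.

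You instead verify the decomposition directly, using Gaussian regression on $Z=\int_s^{T_0}\varphi\,dB$ and L\'evy's characterization. This is self-contained, avoids the white-noise and Malliavin machinery, and supplies details the paper omits. In particular, you check integrability of the drift on $[0,T]$, which genuinely needs $\|\varphi\|^2_{[T,T_0]}>0$; that is the condition implicit in the paper's use of \eqref{EY_donsker} on $[0,T]$. What the citation buys is generality. There the ratio form is a theorem for general $Y$, whereas in your proof it only matches the explicit drift algebraically, and joint Gaussianity is used essentially.

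One small technical point: under the paper's convention $\mathcal{H}_s=\bigcap_{u>s}(\mathcal{F}_u\vee\sigma(Y))$, your identification $\mathcal{H}_s=\mathcal{F}_s\vee\sigma(Z)$ is not automatic. Your computation does show that $\widetilde B$ is a martingale for the unregularized filtration $\mathcal{F}_u\vee\sigma(Y)$. To pass to $\mathbb{H}$, take $A\in\mathcal{H}_s\subset\mathcal{F}_u\vee\sigma(Y)$ for $s<u<t$, so that $\mathbb{E}[(\widetilde B(t)-\widetilde B(u))1_A]=0$. Then let $u\downarrow s$, using $\mathbb{E}|\widetilde B(u)-\widetilde B(s)|\le\mathbb{E}|B(u)-B(s)|+\int_s^u\mathbb{E}|\rho_r|\,dr\to 0$, which follows from your Step 2. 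With this remark the proof is complete.
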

	
\section{A numerical example}

	In this section, we give one numerical example to demonstrate the impact of inside information $Y$ on the problem. Firstly, we give an algorithm to illustrate the process of solving LQ insider optimal control.
	
	\setlength{\algoheightrule}{1.2pt}
	\setlength{\algotitleheightrule}{1.2pt}
	
	\begin{algorithm}[H]
		\caption{Algorithm for the LQ optimal insider control.}
		\label{alg:insider-lq}
		\DontPrintSemicolon
		
		\textbf{Initialization:}
		Choose the time horizons $0<T<T_0$ and coefficients of the linear stochastic system \eqref{lqstate}-\eqref{lqcost}.
		
		Simulate inside information $Y$ using the Monte Carlo method.
		
		Solve $(P, \Lambda)$ by equation \eqref{y-riccati}.
	
		Calculate the optimal state trajectory $x^*$ by equation \eqref{y_x*}.
		
		Obtain the optimal feedback insider control $u^*$ via \eqref{y_u*2}.
	\end{algorithm}
	
	We set the time horizons as $T = 1$ and $T_0 = 1.8$, and $\varphi = 1$, i.e., $Y = B(T_0)$. The coefficients for the stochastic LQ system \eqref{lqstate}-\eqref{lqcost} are chosen as $x_0 = 1$, $A=0.05$, $B=0.8$, $C=0.7$, $D=0.5$, $Q=0.8$, $R=0.5$, and $G=1.5$. We next illustrate the impact of inside information on the system by comparing the informed controller, whose control depends on the inside information $Y$, with the uninformed controller, whose control is independent of \(Y\).

	Figure 1 compares the stochastic Riccati process $P(\cdot,Y)$ with inside information and $P^0(\cdot)$ which is the Riccati process for the uninformed controller. It can be observed that $P^0$ is a smooth deterministic curve. In contrast, the stochastic Riccati process $P$ depending on the inside information $Y$ constitutes a family of random trajectories, where the dark blue line represents a representative sample path.
	
	\begin{figure}[h!]	
		\centering
		\includegraphics[width=0.8\textwidth]{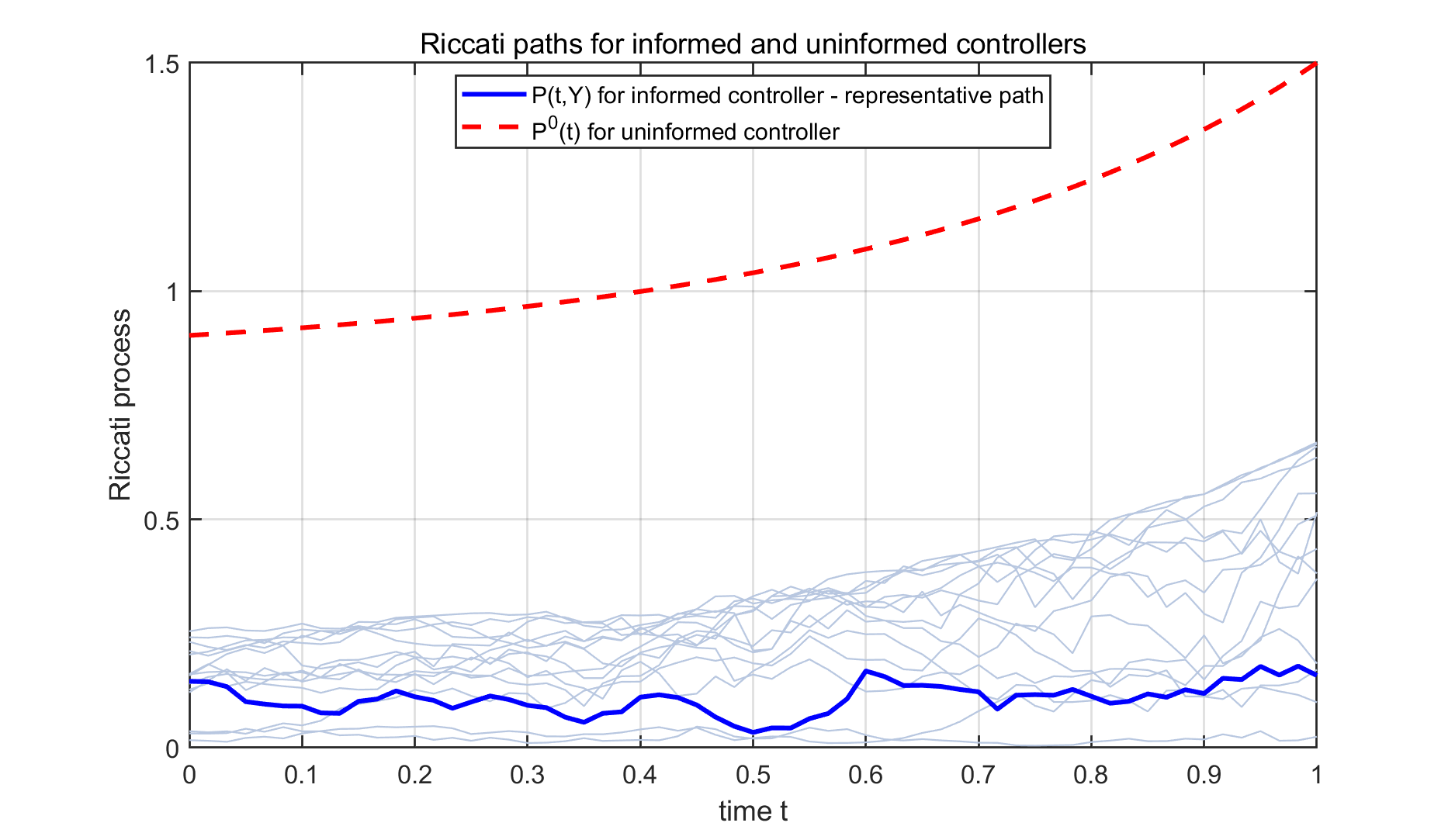} 
		\caption{The curves of $P(t,Y)$ and $P^0(t)$} 
		\label{} 
	\end{figure}
	
	Figure 2 compares the optimal control of the informed controller with access to the inside information $Y$ with that of the uninformed controller. The informed optimal control corresponds to a representative trajectory generated by the realization of inside information $Y(T_0)=1.342$. In the early stage, the absolute value of the informed control exceeds that of the uninformed control, which implies that inside information prompts the controller to adopt stronger control actions. As time approaches the terminal moment, the two curves gradually converge, indicating that the impact of information fades over time.
	
	\begin{figure}[h!]	
		\centering
		\includegraphics[width=0.8\textwidth]{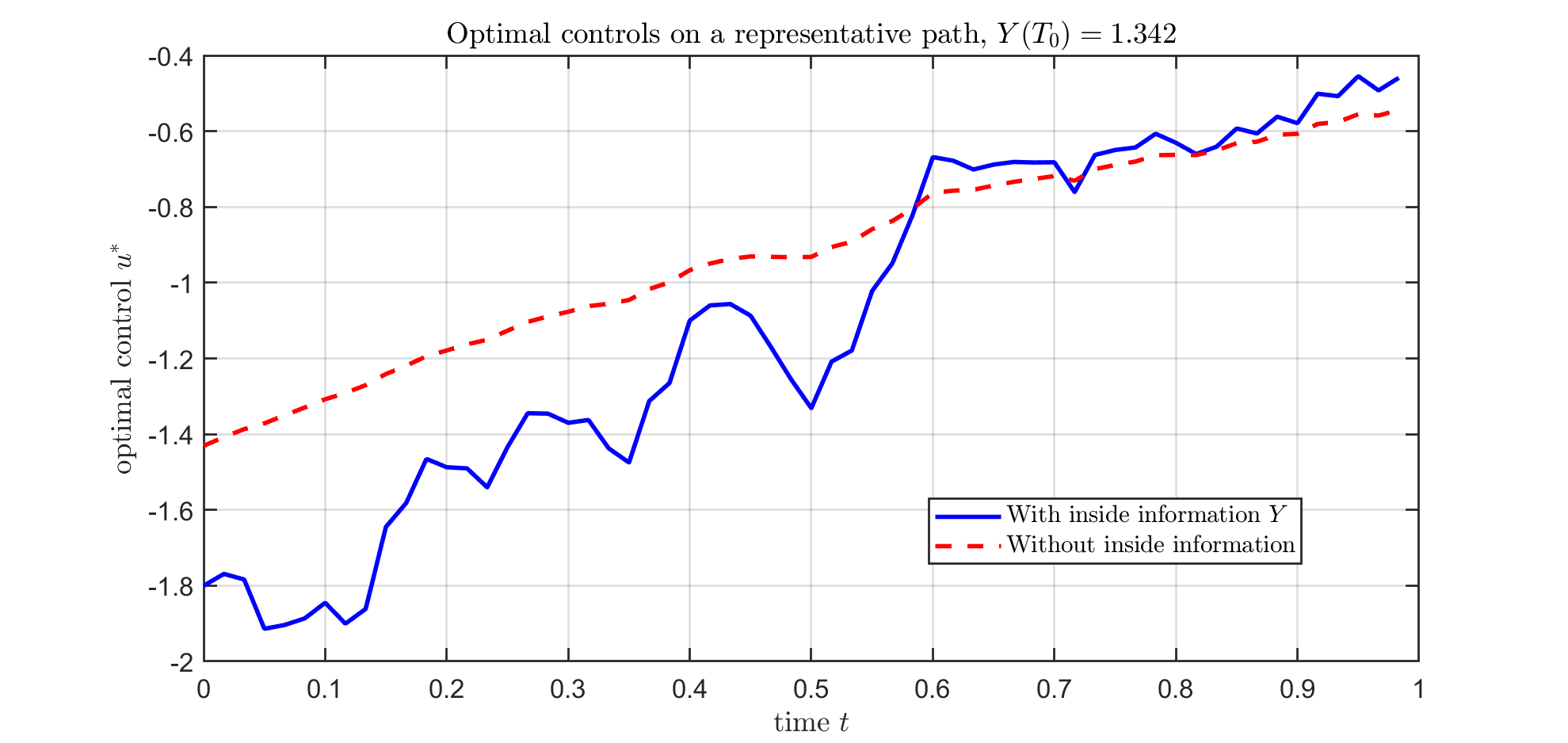} 
		\caption{The curves of $u(t,Y)$ and $u(t)$} 
		\label{} 
	\end{figure}
	
	In Figure 3, we compare the optimal performance (cost) of the informed controller and the uninformed controller. Both subfigures demonstrate that the informed controller who has private information yields a lower average cost $-J$ and a higher average performance $J$, indicating that inside information improves the performance in expectation sense.
	
	\begin{figure}[h!]	
		\centering
		\includegraphics[width=0.8\textwidth]{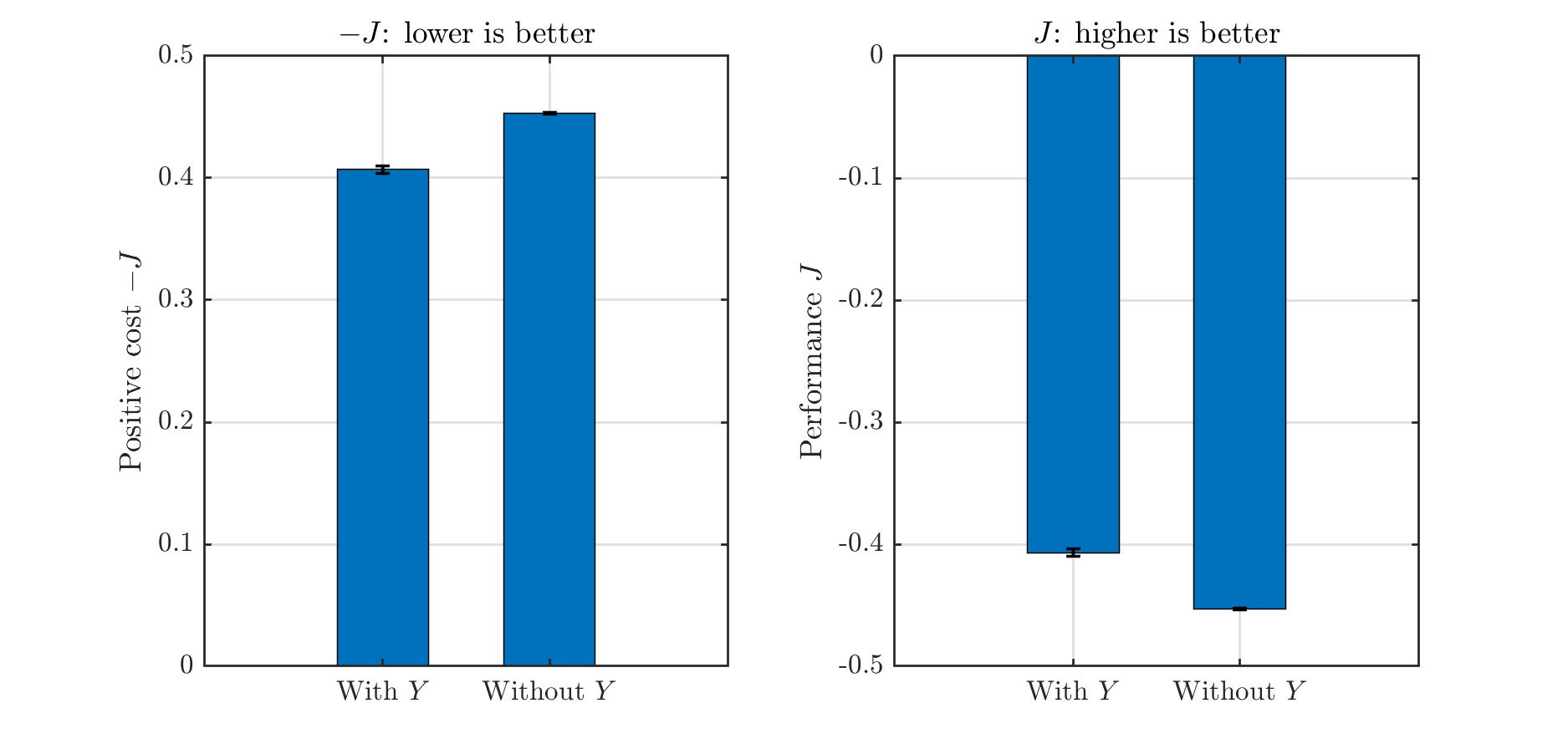} 
		\caption{The curves of $-J$ and $J$} 
		\label{} 
	\end{figure}
	
\section{Concluding remarks}

	In this paper, we have investigated a stochastic optimal control problem with inside information, where the controller has access to future information about the system. The inside information is represented by a static $\mathcal{F}_{T_0}$-measurable random variable $Y$. The control process $u(\cdot, Y)$ is thus adapted to the enlarged filtration generated by the underlying Brownian motion and $Y$, which renders standard stochastic integrals ill-defined. Instead, we adopt forward stochastic integrals to characterize the integral terms in the system. Using the Donsker delta function approach, we transform the original forward SDE into a $y$-parameterized It\^o's type SDE. The existence of solutions to the forward SDE is established via the solutions of the transformed $y$-parameterized equation, and the uniqueness of solutions is further proven by means of flow transformation techniques. Under the Gaussian assumption on $Y$, we derive both necessary and sufficient optimality conditions for the control problem. Through the $y$-parameterized transformation, the LQ optimal control problem with inside information is converted into an LQ problem with random coefficients. We introduce a stochastic Riccati equation to solve for the optimal feedback control, and a numerical example is provided to verify the impact of inside information $Y$.
	
	For future research, we will extend the framework to Stackelberg differential games under inside information, where the leader possessing advanced private information acts first, and the follower without information respond thereafter. We will publish related results elsewhere in the near future.

\section*{Appendix}

\setcounter{equation}{-1}
\renewcommand{\theequation}{A.\arabic{equation}} 

	We prove that \eqref{state} has at most one solution. Denote $\sigma(t, x) := \sigma (t, x, u(t,Y), Y)$, for given $u$ and $Y$. We need the following assumption.
	
	\textbf{(H3)}  $\sigma$ is continuously differentiable with respect to $t, u$, $\sigma_t$ is bounded by $L\left(1+|x|+\left|u\right|\right)$.  The mixed derivatives $\sigma_{xt}$, $\sigma_{xu}$, $u_t$ exist and $\sigma_{xt}$, $\sigma_{xu}$, $\sigma_u$, $u_t$  are bounded. 
	
	Define $F: [0,T]\times \mathbb{R} \times \mathbb{R} \rightarrow \mathbb{R}$ as the solution of
	\begin{align}
		\begin{cases}
			\frac{\partial F}{\partial r}(t, r, x)=\sigma(t, F(t, r, x)),\\
			F(t, 0, x)=x.
		\end{cases}
	\end{align}
	Under \textbf{(H1)}, theory of \textit{ordinary differential equations} (ODE) implies that $F$ defines a flow with $t$ as a parameter. Since $\sigma$ is of class $C^2$, for any fixed $t \in [0,T]$, $r \in \mathbb{R}$, the map $x \mapsto F(t, r, x)$ is a $C^2$-diffeomorphism on $\mathbb{R}$. Let $H(t, r, x)$ be the inverse mapping of $F$ with respect to the variable $x$, i.e.  $H(t, r, x)=F^{-1}(t, r, x)$, which is again of $C^2$ class. Therefore, the following relationship holds:
	\begin{align}
		F(t, r, H(t, r, x))=x, \quad H(t, r, F(t, r, x))=x, \quad \forall r \in \mathbb{R}, \quad x \in \mathbb{R}. \label{F-H}
	\end{align}
	We give some results about $F$ and its inverse $H$. Differentiating both sides of the first equation in \eqref{F-H} with respect to $r$ and $x$, respectively, we can obtain
	\begin{gather}
		\frac{\partial F}{\partial r}\big(t, r, H(t, r, x)\big)+\frac{\partial F}{\partial x}\big(t, r, H(t, r, x)\big) \frac{\partial H}{\partial r}(t, r, x)=0,\label{F_r}\\
		\frac{\partial F}{\partial x}\big(t, r, H(t, r, x)\big) \frac{\partial H}{\partial x}(t, r, x)=1,\label{F_x}
	\end{gather}
	and moreover
	\begin{align}
		\frac{\partial}{\partial r}\left(\frac{\partial F}{\partial x}(t, r, x)\right)&=\frac{\partial}{\partial x}\left(\frac{\partial}{\partial r} F(t, r, x)\right)=\frac{\partial}{\partial x}\big(\sigma(t, F(t, r, x))\big)\nonumber\\
		&=\sigma_x(t, F(t, r, x)) \frac{\partial F}{\partial x}(t, r, x),
	\end{align}
	which is an ODE with respect to $\frac{\partial F}{\partial x}(t, r, x)$. Its solution is 
	\begin{align}
		\frac{\partial F}{\partial x}(t, r, x)=\exp \left\{\int_0^r \sigma_x(t, F(t, s, x)) d s\right\}, \label{F_x_exp}
	\end{align}
	showing that $\dfrac{\partial F}{\partial x}$ is strictly positive and uniformly bounded. Combining \eqref{F_r} and \eqref{F_x}, we can derive that
	\begin{align}
		\frac{\partial H}{\partial r}(t, r, x)&=- \bigg(\frac{\partial F}{\partial x}\big(t, r, H(t, r, x)\big)\bigg)^{-1}\frac{\partial F}{\partial r}\big(t, r, H(t, r, x)\big)\nonumber \\
		&=-\frac{\partial H}{\partial x}(t, r, x) \sigma(t, F(t, r, H(t, r,x)))
		=-\frac{\partial H}{\partial x}(t, r, x) \sigma(t,x ).\label{H_r}
	\end{align}
	
	Let $X(t) = X(t, Y), t \in [0, T]$ be any solution of \eqref{state}, we will prove that this solution is unique.
	
	\noindent(i)  Define $Z(t) := H(t, B(t), X(t))$, then $X(t) = F(t, B(t), Z(t))$. Applying Russo-Vallois's formula, we obtain
	\begin{align*}
		Z(t) &= H(0, 0, X(0))+ \int_0^t \frac{\partial H}{\partial t}(s, B(s), X(s)) ds +\int_0^t \frac{\partial H}{\partial r}(s, B(s), X(s)) \, d^{-} B(s) \\
		&\qquad+ \int_0^t \frac{\partial H}{\partial x}(s, B(s), X(s)) \, d^{-} X(s) +\frac{1}{2} \int_0^t \Bigg\{ \frac{\partial^2 H}{\partial r^2}(s, B(s), X(s)) \\
		&\qquad+ 2\frac{\partial^2 H}{\partial r \partial x}(s, B(s), X(s)) \, \sigma\big(s, X(s)\big) 
		 + \frac{\partial^2 H}{\partial x^2}(s, B(s), X(s)) \, \big(\sigma\big(s, X(s)\big)\big)^2 \Bigg\} ds.
	\end{align*}
	Here, we use the facts that
	\begin{align*}
		d[B,B]_t=dt,\qquad
		d[B,X]_t=\sigma(t,X(t))dt,\qquad
		d[X,X]_t=\sigma^2(t,X(t))dt.
	\end{align*}
	
	Taking partial derivatives with respect to $x$ and $r$ in \eqref{H_r}, it yields
	\begin{align*}
		\frac{\partial^2 H}{\partial r \partial x}(t, r, x)&=- \frac{\partial^2 H}{\partial x^2}(t, r, x)\sigma(t, x) - \frac{\partial H}{\partial x}(t, r, x)\sigma_x(t, x),\\
		\frac{\partial^2 H}{\partial r^2}(t, r, x)&=-\frac{\partial^2 H}{\partial r \partial x}(t, r, x)\sigma(t, x)\\
		&= \frac{\partial^2 H}{\partial x^2}(t, r, x) \sigma^2(t, x)+\frac{\partial H}{\partial x}(t, r, x)(\sigma_x\sigma)(t, x).
	\end{align*}
	
	Noticing $H(0, 0, x_0) = H(0,0, F(0, 0, x_0)) = x_0$ and \eqref{H_r}, we have
	\begin{align*}
		Z(t) &= x_0 + \int_0^t \frac{\partial H}{\partial t}(s, B(s), X(s)) ds+ \int_0^t \frac{\partial H}{\partial r}(s, B(s), X(s))d^{-} B(s)  \\
		&\qquad+\int_0^t \frac{\partial H}{\partial x}(s, B(s), X(s)) \Big[b(s, X(s))dt + \sigma(s, X(s))d^{-}B(s)\Big]\\
		&\qquad+\frac{1}{2} \int_0^t \Bigg\{ \frac{\partial^2 H}{\partial r^2}(s, B(s), X(s)) + 2\frac{\partial^2 H}{\partial r \partial x}(s, B(s), X(s))\sigma\big(s, X(s)\big) \\
		&\qquad + \frac{\partial^2 H}{\partial x^2}(s, B(s), X(s)) \big(\sigma\big(s, X(s)\big)\big)^2 \Bigg\} ds \\
		&= x_0 +  \int_0^t \frac{\partial H}{\partial t}(s, B(s), X(s)) ds\\
		&\qquad+\int_0^t \frac{\partial H}{\partial x}(s, B(s), X(s))\Big[b(s, X(s))- \frac{1}{2}(\sigma_x\sigma)(s, X(s))\Big]dt,
	\end{align*}
	where $b(\cdot, X(\cdot)):=b(\cdot, X(\cdot), u(\cdot, Y), Y)$. Denote $G(t, r, x)=\frac{1}{F_x(t, r, x)}$, together with \eqref{F_x}, we get
	\begin{equation}
		\frac{\partial H}{\partial x}(t, r, x) = \frac{1}{F_x(t, r, H(t, r, x))}=G(t, r, H(t, r, x)).\label{G}
	\end{equation}
	Recalling the first equality in \eqref{F-H}, by differentiating both sides with respect to $t$, we obtain
	\begin{align*}
		\frac{\partial F}{\partial t}(t, r, H(t, r, x)) + \frac{\partial F}{\partial x}(t, r, H(t, r, x))\dfrac{\partial H}{\partial t}(t, r, x) = 0,
	\end{align*}
	thus
	\begin{align*}
		\frac{\partial H}{\partial t}(t, r, x) = -\dfrac{F_t\bigl(t, r, H(t, r, x)\bigr)}{F_x\bigl(t, r, H(t, r, x)\bigr)} =- G(t, r,  H(t, r, x))F_t(t, r, H(t, r, x)).
	\end{align*}
	Finally we get
	\begin{align}
		Z(t) = &\ x_0 +\int_0^t G(s, B(s), Z(s)) \Big[b(s, F(s, B(s),Z(s)))- F_t(s, B(s), Z(s))\nonumber\\
		&- \frac{1}{2}(\sigma_x\sigma)\big(s, F(s, B(s),Z(s))\big)\Big]dt.\label{Z}
	\end{align}
	
	\noindent (ii) We now prove the uniqueness of the solution to equation \eqref{Z}. Let $X_1$ and $X_2$ be two solutions to equation \eqref{state} with the same initial value $x_0$. For $i=1,2$, define $Z_i(t)=H(t,B(t),X_i(t))$, then $Z_1$ and $Z_2$ are two continuous solutions of equation \eqref{Z}. We have
	\begin{align*}
		Z_i(t)=x_0+\int_0^t \Psi(s,Z_i(s))ds,\quad i=1,2,
	\end{align*}
	where
	\begin{align*}
		\Psi(s,z) := 
			G(s,B(s),z) \Big[ b\big(s,F(s,B(s),z)\big) F_t(s,B(s),z)-\frac{1}{2}(\sigma_x\sigma) \big(s,F(s,B(s),z)\big)\Big].
	\end{align*}
	Fixing $\omega \in \Omega$, $B(\cdot,\omega)$ is continuous on $[0,T]$, it is bounded. Moreover,  $Z_1(\cdot)$ and $Z_2(\cdot)$ are continuous on $[0,T]$, they are also bounded. Hence there exist constants $R >0$ such that
	\begin{align*}
		|B(t)| + |Z_1(t)| +|Z_2(t)|\leq R,\quad t\in[0,T].
	\end{align*}
	Here we only require $\Psi$ to satisfy a generalized Lipschitz condition, i.e., for all $|z_1|,|z_2|\leq R$, there exists a  $L(\cdot) \in L^1(0, T)$, such that
	\begin{align*}
		|\Psi(t,z_1)-\Psi(t,z_2)| \leq L(t) |z_1-z_2|,	\quad t\in[0,T].
	\end{align*}
	Therefore, for $t\in[0,T]$, we have
	\begin{align*}
		|Z_1(t)-Z_2(t)| &\leq \int_0^t |\Psi(s,Z_1(s))-\Psi(s,Z_2(s))|ds \leq  \int_0^t L(s) |Z_1(s)-Z_2(s)|ds.
	\end{align*}
	By the following Lemma \ref{lemma in appendix}, we obtain $|Z_1(t)-Z_2(t)|=0$, $t\in[0,T]$. Thus, along each trajectory, 
	\begin{align*}
		Z_1(t)=Z_2(t),\quad t\in[0,T],
	\end{align*}
	which proves the uniqueness of the solution to equation \eqref{Z}. Since mapping $x \mapsto F(t, r, x)$ is $C^2$-diffeomorphism with respect to $x$ and $X_i(t) = F(t, B(t), Z_i(t))$, the solution to equation \eqref{state} is pathwise unique.
	
	\begin{mylem}\label{lemma in appendix}
		Let $\phi(\cdot) := |Z_1(\cdot) - Z_2(\cdot)| \geq 0 $ be a nonnegative continuous function, and let $L(\cdot)\in L^1(0,T)$ with $L(t) \geq 0$ a.e. $t \in [0,T]$. Suppose that
		\begin{align}
			\phi(t) \leq \int_{0}^{t} L(s)\phi(s)\,ds,\quad t \in [0,T].
		\end{align}
		Then $\phi(t) =0,\quad t \in [0,T]$.
	\end{mylem}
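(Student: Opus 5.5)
This is a Grönwall-type lemma with zero forcing term, and I would prove it with the standard integrating-factor argument. Set $\Phi(t) := \int_0^t L(s)\phi(s)\,ds$ for $t\in[0,T]$. Since $\phi$ is continuous on the compact interval $[0,T]$, it is bounded, say by $M$. Then $L\phi \in L^1(0,T)$, so $\Phi$ is absolutely continuous and $\Phi(0)=0$. Moreover, $\Phi'(t) = L(t)\phi(t)$ for a.e. $t$, and $\Phi\ge 0$ because $L\ge0$ and $\phi\ge0$.

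The key step is to turn the hypothesis into a differential inequality for $\Phi$. Since $L(t)\ge 0$ a.e. and $\phi(t)\le \Phi(t)$, we get
\begin{align*}
\Phi'(t) = L(t)\phi(t) \le L(t)\Phi(t), \quad \text{a.e. } t\in[0,T].
\end{align*}
Introduce the integrating factor $E(t) := \exp\big(-\int_0^t L(s)\,ds\big)$. This is absolutely continuous and strictly positive because $L\in L^1(0,T)$. The product of two absolutely continuous functions on a compact interval is absolutely continuous, so $E\Phi$ is too, and
\begin{align*}
\frac{d}{dt}\big(E(t)\Phi(t)\big) = E(t)\big(\Phi'(t) - L(t)\Phi(t)\big) \le 0 \quad \text{a.e.}
\end{align*}
Integrating from $0$ to $t$ and using $\Phi(0)=0$ gives $E(t)\Phi(t)\le 0$, hence $\Phi(t)\le 0$. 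Combined with $\Phi\ge0$, this yields $\Phi\equiv 0$. Finally $0\le\phi(t)\le\Phi(t)=0$, so $\phi\equiv0$ on $[0,T]$.

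The only step needing care is justifying the a.e. product rule for $E\Phi$ when $L$ is merely integrable rather than continuous. This follows from the absolute continuity of both factors, noted above.

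If one prefers to avoid that, I would instead iterate the inequality. Substituting the bound into itself $n$ times gives
\begin{align*}
\phi(t)\le M\,\frac{\big(\int_0^t L(s)\,ds\big)^n}{n!}.
\end{align*}
This uses the identity $\int_0^t L(s)\frac{(\int_0^s L)^{k}}{k!}\,ds = \frac{(\int_0^t L)^{k+1}}{(k+1)!}$, which holds by the chain rule for absolutely continuous functions. Letting $n\to\infty$ gives $\phi\equiv 0$. Either route suffices, and I would present the integrating-factor version as the main argument.
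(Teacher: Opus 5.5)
Your integrating-factor argument is correct and is essentially the paper's proof: the paper also sets $\Phi(t)=\int_0^t L(s)\phi(s)\,ds$, derives $\Phi'\le L\Phi$ a.e., and shows that $\Gamma(t)=\Phi(t)\exp\{-\int_0^t L(s)\,ds\}$ is nonincreasing with $\Gamma(0)=0$ and $\Gamma\ge 0$. Your appeal to the absolute continuity of $E\Phi$ is a useful addition. The paper goes from $\Gamma'\le 0$ a.e. to monotonicity of $\Gamma$ without saying that $\Gamma$ is absolutely continuous, and that property is exactly what this step requires.
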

	
	\begin{proof}
		Define $\Phi(t) := \int_{0}^{t} L(s)\phi(s)\,ds$. Since $L \in L^1(0,T)$ and $\phi$ is continuous on $[0,T]$, we have $L(\cdot)\phi(\cdot)\in L^1(0,T)$. Hence $\Phi$ is continuous, and $\Phi'(t) = L(t)\phi(t),\quad \text{a.e. } t \in [0,T]$. 
		By the assumption $\phi(t) \leq \Phi(t)$, it follows that, for a.e. $t \in [0,T]$, $\Phi'(t) = L(t)\phi(t) \leq L(t)\Phi(t)$.
		
		Define
		\begin{align*}
			\Gamma(t) := \Phi(t) \exp\left\{ -\int_{0}^{t} L(s)\,ds \right\}.
		\end{align*}
		Since $\Phi$ and $t \mapsto \exp\left\{ -\int_{0}^{t} L(s)ds \right\}$ are continuous, we differentiate $\Gamma$ with respect to $t \in [0,T]$, to obtain
		\begin{align*}
			\Gamma'(t) 
			&= \Phi'(t)\exp\left\{ -\int_{0}^{t} L(s)\,ds \right\} 
			- \Phi(t)L(t)\exp\left\{ -\int_{0}^{t} L(s)\,ds \right\} \\
			&= \exp\left\{ -\int_{0}^{t} L(s)\,ds \right\} 
			\Big( \Phi'(t) - L(t)\Phi(t) \Big).
		\end{align*}
		Since $\Phi'(t) \leq L(t)\Phi(t)$, we have $\Gamma'(t) \leq 0,\ \text{a.e. } t \in [0,T]$. Therefore, $\Gamma$ is nonincreasing. Since $\Gamma(0) = \Phi(0) = 0$, for any $t \in [0,T]$, $\Gamma(t) \leq \Gamma(0) = 0$. 
		On the other hand, since $L \geq 0$ and $\phi \geq 0$, we have
		\begin{align*}
			\Gamma(t) = \Phi(t) \exp\left\{ -\int_{0}^{t} L(s)\,ds \right\} \geq 0.
		\end{align*}
		Hence $\Gamma(t) = 0,\ t \in [0,T]$, and $\Phi(t) = 0,\ t \in [0,T]$.
		Finally, by the assumption, $0 \leq \phi(t) \leq \Phi(t) = 0$. Therefore, $\phi(t) = 0,\ t \in [0,T]$. The proof is complete.    
	\end{proof}
	
	\noindent(iii) Fix $\omega\in\Omega$, we now verify the generalized Lipschitz continuity of $\Psi$ with respect to $z$. 
	
	We write $\Psi(s,z)=A(s,z)M(s,z)$, and denote
	\begin{align*}
		A(s,z) &:= G(s,B(s),z),\\
		M(s,z) &:= b\big(s,F(s,B(s),z)\big) - F_t(s,B(s),z) -\frac{1}{2}(\sigma_x\sigma) \big(s,F(s,B(s),z)\big).
	\end{align*}
	Then
	\begin{align}
		\partial_z\Psi(s,z) = A_z(s,z)M(s,z)+A(s,z)M_z(s,z).\label{Psi_z}
	\end{align}

	Firstly, we claim that $A$ and $A_z$ are bounded on compact sets. Noticing \eqref{F_x} and \eqref{G}, for $|r| \leq M, |\sigma_x| \leq L$, we have $ e^{-LM} \leq |F_x(t, r, z)| \leq e^{LM}$, then $A(s, z)$ is bounded for $s \in[0, T]$, $|z|\leq R$. We have
	\begin{align*}
		A_z(t,z) =\left. -\frac{F_{xx}(t,r,z)}{(F_x(t,r,z))^2}\right|_{r=B(s)}.
	\end{align*}
	Differentiating \eqref{F_x} with respect to $z$, we obtain
	\begin{align*}
		F_{xx}(t,r,z) = F_x(t,r,z) \int_{0}^{r} \sigma_{xx}\big(t,F(t,s,z)\big)F_x(t,s,z)\,ds.
	\end{align*}
	By the boundedness of $\sigma_{xx}$, we may assume that$ |\sigma_{xx}|\leq L$. Then, for $|r|\leq M$ and $|z|\leq R$,
	\begin{align*}
		|F_{xx}(t,r,z)| &\leq |F_x(t,r,z)| \int_{0}^{r} \left|\sigma_{xx}\big(t,F(t,s,z)\big)\right| \left|F_x(t,s,z\right)|ds  \\
		&\leq \exp\{LM\} \int_{0}^{|r|} L\exp\{LM\} ds \leq LM\exp\{2LM\}.
	\end{align*}
	Since $F_x$ is bounded away from zero,  $A_z$ is bounded on compact set.
	
	We next prove that $ M$ and $ M_z $ can be controlled by $u(t)$. Recalling the definition of $F$,
	\begin{align*}
		F(t,r,z) = z+\int_0^r \sigma\big(t,F(t,\rho,z),u(t,Y),Y\big)d\rho.
	\end{align*}
	Using the linear growth condition of $\sigma$, for $|r|\leq M$ and $|Z|\leq R$,  by Gronwall's inequality, we obtain
	\begin{align}
		|F(t,r,z)| \leq C \big(1+|u(t,Y)|\big). \label{F_u}
	\end{align}
	
	Moreover, by differentiating the equation with respect to $t$, we have
	\begin{align*}
		F_t(t,r,z) &= \int_0^r \Big[ \sigma_t\big(t,F(t,\rho,z),u(t,Y),Y\big) + \sigma_x\big(t,F(t,\rho,z),u(t,Y),Y\big)F_t(t,\rho,z)\\
		&\qquad + \sigma_u\big(t,F(t,\rho,z),u(t,Y),Y\big)u_t(t,Y) \Big]d\rho .
	\end{align*}
	Together with (H3) and \eqref{F_u}, we get
	\begin{align*}
		|F_t(t,r,z)| &\leq \int_0^{|r|} C\big(1+|F(t,\rho,z)|+|u(t,Y)|\big)d\rho +L\int_0^{|r|}|F_t(t,\rho,z)|d\rho \\
		&\leq C\big(1+|u(t,Y)|\big) +L\int_0^{|r|}|F_t(t,\rho,z)|d\rho.
	\end{align*}
	Again by Gronwall's inequality,
	\begin{align}
		|F_t(t,r,z)| \leq C\big(1+|u(t,Y)|\big),
		\qquad |r|\leq M,\ |z|\leq R.
		\label{Ft_u}
	\end{align}
	Particularly, 
	\begin{align}
		|F_t(s,B(s),z)| \leq C\big(1+|u(s,Y)|\big), \qquad s\in[0,T],\ |z|\leq R.
		\label{Ft_B_u}
	\end{align}
	
	By the linear growth condition of $b$ and $\sigma$, the boundedness of $\sigma_x$, and \eqref{F_u}, \eqref{Ft_B_u}, we obtain
	\begin{align*}
		|M(s,z)| &\leq \big|b\big(s,F(s,B(s),z),u(s,Y),Y\big)\big| + \big|F_t(s,B(s),z)\big| \\
		&\quad +\frac{1}{2} \big|(\sigma_x\sigma)\big(s,F(s,B(s),z),u(s,Y),Y\big)\big| \\
		&\leq C\big(1+|u(s,Y)|\big), \qquad s\in[0,T], |z|\leq R.
	\end{align*}
	
	Then we estimate $M_z$. Differentiating $M$ with respect to $z$, we have
	\begin{align*}
		M_z(s,z) &= b_x\big(s,F(s,B(s),z),u(s,Y),Y\big)F_x(s,B(s),z) - F_{tx}(s,B(s),z) \\
		&\quad -\frac{1}{2} \Big( \sigma_{xx}\sigma+\sigma_x^2 \Big)\big(s,F(s,B(s),z),u(s,Y),Y\big)F_x(s,B(s),z).
	\end{align*}
	To estimate $F_{tx}$, we differentiate the formula of $F_x$ with respect to $t$, 
	\begin{align*}
		F_{tx}(t,r,z) &= F_x(t,r,z) \int_0^r \Big[ 	\sigma_{tx}\big(t,F(t,\rho,z),u(t,Y),Y\big) + \sigma_{xx}\big(t,F(t,\rho,z),u(t,Y),Y\big)F_t(t,\rho,z)\nonumber\\
		&\qquad+ \sigma_{xu}\big(t,F(t,\rho,z),u(t,Y),Y\big)u_t(t,Y) \Big]d\rho.
	\end{align*}
	Under (H3), and the estimations
	\begin{align*}
		|F(t,r,z)| \leq C\big(1+|u(t,Y)|\big),\quad
		|F_t(t,r,z)| \leq C\big(1+|u(t,Y)|\big), \quad
		|F_x(t,r,z)| \leq C,
	\end{align*}
	we could derive
	\begin{align*}
		|F_{tx}(t,r,z)| 
		\leq |F_x(t,r,z)| \int_0^{|r|}C_{\omega,K}\big(1+|u(t,Y)|\big)d\rho 
		\leq C\big(1+|u(t,Y)|\big).
	\end{align*}
	Hence $|F_{tx}(s,B(s),z)| \leq C\big(1+|u(s,Y)|\big), s\in[0,T],\ z\in K.$
	
	Finally, by the boundedness of $b_x$, $\sigma_x$, $\sigma_{xx}$, together with the linear growth of $\sigma$, estimations of $F$, $F_x$, $F_{tx}$, we obtain
	\begin{align*}
		|M_z(s,z)| &\leq \big|b_x\big(s,F(s,B(s),z),u(s,Y),Y\big)F_x(s,B(s),z)\big| + \big|F_{tx}(s,B(s),z)\big| \\
		&\quad + \frac{1}{2} \left| \Big( \sigma_{xx}\sigma+\sigma_x^2 \Big)\big(s,F(s,B(s),z),u(s,Y),Y\big) F_x(s,B(s),z) \right| \\
		&\leq C\big(1+|u(s,Y)|\big),
	\end{align*}
	for all $s\in[0,T]$ and $|z|\leq R$. 
	
	Based on the above discussion and \eqref{Psi_z}, there exists a constant $C>0$ such that
	\begin{align*}
		\left|\Psi_z(s, z)\right| \leq C(1 + \left|u(s, Y)\right|) := L(s) \in L^1(0, T),
	\end{align*}
	for all $s\in[0,T]$ and $|z|\leq R$, which means for all $|z_1|,|z_2|\leq R$, there exists $L(\cdot) \in L^1(0,T)$ such that
	\begin{align*}
		|\Psi(s,z_1)-\Psi(s,z_2)| \leq L(s)|z_1-z_2|, \qquad s\in[0,T].
	\end{align*}
    We complete the proof of uniqueness.
\end{document}